\theoremstyle{plain}
\newtheorem{theorem}{Theorem}[section]
\newtheorem{proposition}[theorem]{Proposition}
\newtheorem{lemma}[theorem]{Lemma}
\theoremstyle{definition}
\newtheorem{definition}{Definition}[section]
\theoremstyle{remark}
\newtheorem{remark}{Remark}[section]
\theoremstyle{example}
\newtheorem{example}{Example}[section]
\numberwithin{equation}{section}
\begin{document}

\title[Clark-Ocone Formula]
{Clark-Ocone Formula for Generalized Functionals of Discrete-Time Normal Noises}

\author{Caishi Wang}
\address[Caishi Wang]
          {School of Mathematics and Statistics,
          Northwest Normal University,
          Lanzhou, Gansu 730070,
          People's Republic of China}
\email{wangcs@nwnu.edu.cn}

\author{Shuai Lin}
\address[Shuai Lin]
         {School of Mathematics and Statistics,
         Northwest Normal University,
          Lanzhou, Gansu 730070,
          People's Republic of China}

\author{Ailing Huang}
\address[Ailing Huang]
         {School of Mathematics and Statistics,
         Northwest Normal University,
          Lanzhou, Gansu 730070,
          People's Republic of China}

\subjclass[2010]{Primary: 60H40; Secondary: 47B38}
\keywords{Discrete-time normal noise, Generalized functional, Fock transform, Clark-Ocone formula}

\begin{abstract}
The Clark-Ocone formula in the theory of discrete-time chaotic calculus holds only for square integrable functionals of
discrete-time normal noises. In this paper, we aim at extending this formula to generalized functionals of discrete-time normal noises.
Let $Z$ be a discrete-time normal noise that has the chaotic representation property.
We first prove a result concerning the regularity of generalized functionals of $Z$.
Then, we use the Fock transform to define some fundamental operators on generalized functionals of $Z$,
and apply the above mentioned regularity result to prove the continuity of these operators.
Finally, we establish the Clark-Ocone formula for generalized functionals of $Z$,
and show its application results, which include the covariant identity result
and the variant upper bound result for generalized functionals of $Z$.
\end{abstract}

\maketitle

\section{Introduction}\label{sec-1}

One of the important theorems in Privault's discrete-time chaotic calculus \cite{privault, wang-lc} is its Clark-Ocone formula, which reads
\begin{equation}\label{eq-1-1}
  \xi = \mathbb{E}\xi + \sum_{k=0}^{\infty}Z_k \mathbb{E}[\partial_k \xi | \mathcal{F}_{k-1}],\quad \xi \in \mathcal{L}^2(Z),
\end{equation}
where $Z=(Z_k)$ is a discrete-time normal noise, $\mathcal{L}^2(Z)$ the space of square integrable functionals of $Z$,
$\mathcal{F}_k$ the $\sigma$-field generated by $(Z_j; 0\leq j \leq k)$,
$\partial_k$ the annihilation operator on $\mathcal{L}^2(Z)$,
and the series on the righthand side converges in the norm of $\mathcal{L}^2(Z)$.

The Clark-Ocone formula (\ref{eq-1-1}) directly gives the predictable representation of functionals of $Z$,
which implies the predictable representation property of discrete-time martingales associated with $Z$.
The formula can also be used to establish the corresponding covariant identities \cite{privault}.
More importantly, as was shown by Gao and Privault \cite{gao}, this formula plays an important role in proving logarithmic Sobolev inequalities
for Bernoulli measures. There are other applications based on the formula \cite{wang-lc,wang-z}.

Despite its multiple uses, however, the Clark-Ocone formula (\ref{eq-1-1}) still suffers from a main drawback.
That is, it holds only for the square integrable functionals $\xi$ of $Z$, which excludes many other interesting functionals of $Z$.

On the other hand, as is shown in \cite{wang-z},
one can use the canonical orthonormal basis of $\mathcal{L}^2(Z)$ to construct a nuclear space $\mathcal{S}(Z)$
such that $\mathcal{S}(Z)$ is densely contained in $\mathcal{L}^2(Z)$. Thus, by identifying $\mathcal{L}^2(Z)$ with its dual,
one can get a Gel'fand triple
\begin{equation}\label{eq-1-2}
\mathcal{S}(Z)\subset \mathcal{L}^2(Z) \subset \mathcal{S}^*(Z),
\end{equation}
where $\mathcal{S}^*(Z)$ is the dual of $\mathcal{S}(Z)$, which is endowed with the strong topology,
which can not be induced by any norm \cite{gelfand}.
As usual, $\mathcal{S}(Z)$ is called the testing functional space of $Z$,
while $\mathcal{S}^*(Z)$ is called the generalized functional space of $Z$.
It turns out \cite{wang-chen-1} that the generalized functional space $\mathcal{S}^*(Z)$ can accommodate
many quantities of theoretical interest that can not be covered by $\mathcal{L}^2(Z)$.

In this paper, we would like to extend the Clark-Ocone formula (\ref{eq-1-1}) to the generalized functionals of $Z$.
More precisely, we would like to establish a Clark-Ocone formula for all elements of $\mathcal{S}^*(Z)$.
Our main work is as follows.

We first prove a result concerning the regularity of generalized functionals in $\mathcal{S}^*(Z)$ in Section~\ref{sec-2}.
Then, in Section~\ref{sec-3}, we use the Fock transform \cite{wang-chen-1} to define some fundamental operators on
$\mathcal{S}^*(Z)$, and apply the above mentioned regularity result to prove the continuity of these operators.
Finally, we establish our formula, namely the Clark-Ocone formula for generalized functionals in $\mathcal{S}^*(Z)$ in Section~\ref{sec-3},
and show its application results in Section~\ref{sec-4}, which include the covariant identity result
and the variant upper bound result for generalized functionals in $\mathcal{S}^*(Z)$.

Throughout this paper, $\mathbb{N}$ designates the set of all nonnegative integers and $\Gamma$
the finite power set of $\mathbb{N}$, namely
\begin{equation*}
\Gamma = \{\,\sigma \mid \text{$\sigma \subset \mathbb{N}$ and $\#(\sigma) < \infty$} \,\},
\end{equation*}
where $\#(\sigma)$ means the cardinality of $\sigma$ as a set. If $k\in \mathbb{N}$ and $\sigma \in \Gamma$,
then we simply write $\sigma \cup k$ for $\sigma \cup \{k\}$. Similarly, we use $\sigma \setminus k$.

\section{Generalized functionals of discrete-time normal noises}\label{sec-2}

In all the following sections, we always assume that $(\Omega, \mathcal{F}, P)$ is a given probability space.
We use $\mathbb{E}$ to mean the expectation with respect to $P$.
As usual, $\mathcal{L}^{2}(\Omega, \mathcal{F}, P)$ denotes the Hilbert space of
square integrable complex-valued measurable functions on $(\Omega, \mathcal{F}, P)$.
We use $\langle\cdot,\cdot\rangle$ and $\|\cdot\|$ to mean the inner product and norm of $\mathcal{L}^{2}(\Omega, \mathcal{F}, P)$, respectively.
By convention, $\langle\cdot,\cdot\rangle$ is conjugate-linear in its first argument and linear in its second argument.

\subsection{Discrete-time normal noises}\label{subsec-2-1}

A sequence $Z=(Z_n)_{n\in \mathbb{N}}$ of integrable random variables on
$(\Omega, \mathcal{F}, P)$ is called a discrete-time normal noise if it satisfies:
\begin{enumerate}[(i)]
  \item $\mathbb{E}[Z_n | \mathcal{F}_{n-1}] = 0$
 for $n\geq 0$;
  \item $\mathbb{E}[Z_n^2 | \mathcal{F}_{n-1}] = 1$
for $n\geq 0$.
\end{enumerate}
Here $\mathcal{F}_{-1}=\{\emptyset, \Omega\}$,
$\mathcal{F}_n = \sigma(Z_k; 0\leq k \leq n)$ for $n \in \mathbb{N}$ and $\mathbb{E}[\cdot | \mathcal{F}_{n}]$ means the conditional expectation
given $\mathcal{F}_{n}$.

\begin{example}\label{examp-2-1}
Let $\zeta=(\zeta_n)_{n\in \mathbb{N}}$ be an independent sequence of random variables on $(\Omega, \mathcal{F}, P)$ with
\begin{equation*}
  P\left\{\zeta_n = -1\right\}= P\left\{\zeta_n = 1\right\} = \frac{1}{2},\quad n \in \mathbb{N}.
\end{equation*}
Write $\mathcal{G}_{-1}=\{\emptyset, \Omega\}$ and $\mathcal{G}_n = \sigma(\zeta_k; 0\leq k \leq n)$ for $n \in \mathbb{N}$. Then,
one can immediately see that
\begin{enumerate}[(i)]
  \item $\mathbb{E}[\zeta_n | \mathcal{G}_{n-1}] = 0$
 for $n\geq 0$;
  \item $\mathbb{E}[\zeta_n^2 | \mathcal{G}_{n-1}] = 1$
for $n\geq 0$.
\end{enumerate}
Thus $\zeta$ is a discrete-time normal noise. Note that, by letting $X=(X_n)$ be the partial sum sequence of $\zeta$,
one gets the classical random walk.
\end{example}

For a discrete-time normal noise $Z=(Z_n)_{n\in \mathbb{N}}$ on $(\Omega, \mathcal{F}, P)$,
one can construct a corresponding family $\{Z_{\sigma}\mid \sigma \in \Gamma\}$ of random variables on $(\Omega, \mathcal{F}, P)$
in the following manner
\begin{equation}\label{eq-2-1}
 Z_{\emptyset}=1\quad \text{and}\quad   Z_{\sigma} = \prod_{i\in \sigma}Z_i,\quad \text{$\sigma \in \Gamma$, $\sigma \neq \emptyset$}.
\end{equation}
We call $\{Z_{\sigma}\mid \sigma \in \Gamma\}$ the canonical functional system of $Z$.

\begin{lemma}\cite{emery,privault,wang-lc}\label{lem-2-1}
Let $Z=(Z_n)_{n\in \mathbb{N}}$ be a discrete-time normal noise on $(\Omega, \mathcal{F}, P)$.
Then its canonical functional system $\{Z_{\sigma}\mid \sigma \in \Gamma\}$ forms a countable orthonormal system in
$\mathcal{L}^2(\Omega, \mathcal{F}, P)$.
\end{lemma}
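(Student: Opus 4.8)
The plan is to prove orthonormality by directly computing the inner products $\langle Z_\sigma, Z_\tau\rangle$ through an induction on the cardinality of $\sigma\cup\tau$, the two defining martingale properties (i) and (ii) of the noise furnishing the inductive reduction. Countability of the system is immediate: $\Gamma$ is the countable union over $N\in\mathbb{N}$ of the finite collections of subsets of $\{0,1,\dots,N\}$, hence $\Gamma$ is countable, and so is $\{Z_\sigma\mid\sigma\in\Gamma\}$.

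For orthonormality, since each $Z_n$ is real-valued the conjugation in $\langle\cdot,\cdot\rangle$ is harmless, and I would reduce matters to showing $\mathbb{E}[Z_\sigma Z_\tau]=\delta_{\sigma\tau}$ for all $\sigma,\tau\in\Gamma$. The key structural observation is that $Z_\sigma Z_\tau=\prod_{i\in\sigma\cap\tau}Z_i^2\,\prod_{i\in\sigma\triangle\tau}Z_i$, so that the largest index $n=\max(\sigma\cup\tau)$ occurs either squared (when $n\in\sigma\cap\tau$) or linearly (when $n$ lies in exactly one of $\sigma,\tau$), while every remaining factor is built from $Z_i$ with $i<n$ and is therefore $\mathcal{F}_{n-1}$-measurable.

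I would then induct on $N=\#(\sigma\cup\tau)$. The base case $N=0$ forces $\sigma=\tau=\emptyset$ and gives $\mathbb{E}[Z_\emptyset^2]=\mathbb{E}[1]=1$. For the inductive step, set $n=\max(\sigma\cup\tau)$ and write $Z_\sigma Z_\tau=Z_n^{\,j}W$, where $W$ is the $\mathcal{F}_{n-1}$-measurable partial product and $j\in\{1,2\}$. Conditioning on $\mathcal{F}_{n-1}$ and pulling out $W$: if $n\in\sigma\cap\tau$, property (ii) yields $\mathbb{E}[Z_n^2\mid\mathcal{F}_{n-1}]=1$, so $\mathbb{E}[Z_\sigma Z_\tau]=\mathbb{E}[Z_{\sigma\setminus n}Z_{\tau\setminus n}]$, which the induction hypothesis identifies with $\delta_{\sigma\setminus n,\,\tau\setminus n}=\delta_{\sigma\tau}$ (the last equality holding because $n$ belongs to both sets); if instead $n$ lies in exactly one of the two sets, then $\sigma\neq\tau$ and property (i) gives $\mathbb{E}[Z_n\mid\mathcal{F}_{n-1}]=0$, whence $\mathbb{E}[Z_\sigma Z_\tau]=0=\delta_{\sigma\tau}$. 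This closes the induction and yields both the normalization $\|Z_\sigma\|=1$ and the orthogonality $\langle Z_\sigma, Z_\tau\rangle=0$ for $\sigma\neq\tau$.

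The main obstacle I anticipate is not the algebra of the induction but the integrability bookkeeping needed to license the conditional-expectation manipulations: a priori the $Z_n$ are only assumed integrable, and products of square-integrable variables need not be integrable. I would handle this within the same induction, checking at each peeling step that $Z_n^{\,j}W$ is integrable so that the pull-out identity $\mathbb{E}[Z_n^{\,j}W\mid\mathcal{F}_{n-1}]=W\,\mathbb{E}[Z_n^{\,j}\mid\mathcal{F}_{n-1}]$ applies. In particular, running the computation with $\sigma=\tau$ simultaneously establishes $\mathbb{E}[Z_\sigma^2]=1<\infty$, which confirms $Z_\sigma\in\mathcal{L}^2(\Omega,\mathcal{F},P)$ and hence that the inner products in question are well defined.
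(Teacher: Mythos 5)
Your proof is correct. The paper itself gives no proof of Lemma~\ref{lem-2-1} --- it is quoted from the references \cite{emery,privault,wang-lc} --- and your argument (peel off the top index $n=\max(\sigma\cup\tau)$, condition on $\mathcal{F}_{n-1}$, and invoke property (i) or (ii) according as $n$ lies in one or both sets) is exactly the standard one found there. Your handling of the integrability point is the right one; to make it fully airtight, run the diagonal case $\sigma=\tau$ first, where all factors are nonnegative so the pull-out identity holds unconditionally in $[0,\infty]$ and yields $\mathbb{E}[Z_\sigma^2]=1$, and then the off-diagonal case is licensed because $Z_\sigma Z_\tau\in\mathcal{L}^1$ and $Z_{\sigma\setminus n}Z_{\tau\setminus n}\in\mathcal{L}^1$ by Cauchy--Schwarz.
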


Let $\mathcal{F}_{\infty}=\sigma(Z_n; n\in \mathbb{N})$ be the $\sigma$-field over $\Omega$ generated by
a discrete-time normal noise $Z=(Z_n)_{n\in \mathbb{N}}$ on $(\Omega, \mathcal{F}, P)$.
Then the canonical functional system $\{Z_{\sigma}\mid \sigma \in \Gamma\}$ is also a countable orthonormal system in
the space $\mathcal{L}^2(\Omega, \mathcal{F}_{\infty}, P)$
of square integrable complex-valued measurable functions on $(\Omega, \mathcal{F}_{\infty}, P)$.

In the literature, $\mathcal{F}_{\infty}$-measurable functions on $\Omega$ are also known as functionals of $Z$.
Thus elements of $\mathcal{L}^2(\Omega, \mathcal{F}_{\infty}, P)$ are naturally called square integrable functionals of $Z$.

\begin{definition}\label{def-2-1}
A discrete-time normal noise $Z=(Z_n)_{n\in \mathbb{N}}$ on $(\Omega, \mathcal{F}, P)$ is said to have the chaotic representation property if
its canonical functional system $\{Z_{\sigma}\mid \sigma \in \Gamma\}$ is total in
$\mathcal{L}^2(\Omega, \mathcal{F}_{\infty}, P)$,
where $\mathcal{F}_{\infty}=\sigma(Z_n; n\in \mathbb{N})$.
\end{definition}

Thus, if a discrete-time normal noise $Z$ has the chaotic representation property,
then its canonical functional system $\{Z_{\sigma}\mid \sigma \in \Gamma\}$
is actually an orthonormal basis of $\mathcal{L}^2(\Omega, \mathcal{F}_{\infty}, P)$.

\subsection{Generalized functionals}\label{subsec-2-2}

From now on, we always assume that $Z=(Z_n)_{n\in \mathbb{N}}$ is a given discrete-time normal noise
on $(\Omega, \mathcal{F}, P)$ that has the chaotic representation property.

For brevity, we use $\mathcal{L}^2(Z)$ to denote the space of square integrable functionals of $Z$, namely
\begin{equation*}
  \mathcal{L}^2(Z) = \mathcal{L}^2(\Omega, \mathcal{F}_{\infty}, P),
\end{equation*}
where $\mathcal{F}_{\infty}=\sigma(Z_n; n\in \mathbb{N})$. For $k\geq 0$, we denote by $\mathcal{F}_k$
the $\sigma$-field generated by $(Z_j; 0\leq j \leq k)$, namely
\begin{equation*}
\mathcal{F}_k=\sigma(Z_j; 0\leq j \leq k).
\end{equation*}
We note that $\mathcal{L}^2(Z)$ shares the same inner product $\langle\cdot,\cdot\rangle$ and norm $\|\cdot\|$
 with $\mathcal{L}^2(\Omega, \mathcal{F}, P)$, and moreover the canonical functional system
$\{Z_{\sigma}\mid \sigma \in \Gamma\}$ of $Z$ forms a countable orthonormal basis for $\mathcal{L}^2(Z)$,
which we call the canonical orthonormal basis of $\mathcal{L}^2(Z)$.

\begin{lemma}\label{lem-2-2}\cite{wang-z}
Let $\sigma\mapsto\lambda_{\sigma}$ be the $\mathbb{N}$-valued function on $\Gamma$ given by
\begin{equation}\label{eq-2-2}
\lambda_{\sigma}=
\left\{
  \begin{array}{ll}
    \prod_{k\in\sigma}(k+1), & \hbox{$\sigma\neq \emptyset$, $\sigma\in\Gamma$;}\\
    1, & \hbox{$\sigma=\emptyset$, $\sigma\in\Gamma$.}
  \end{array}
\right.
\end{equation}
Then, for $p>1$, the positive term series $\sum_{\sigma\in\Gamma}\lambda^{-p}_{\sigma}$ converges and moreover
\begin{equation}\label{eq-2-3}
\sum_{\sigma\in\Gamma}\lambda^{-p}_{\sigma}\leq \exp\bigg[\sum_{k=1}^{\infty}k^{-p}\bigg]<\infty.
\end{equation}
\end{lemma}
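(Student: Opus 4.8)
The plan is to exploit the multiplicative structure of $\lambda_{\sigma}$ to factorize the sum over the finite power set $\Gamma$ into an infinite product, and then to bound that product by the exponential of a convergent $p$-series.

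First I would observe that, writing $a_k = (k+1)^{-p}$ for $k\in\mathbb{N}$, the definition of $\lambda_{\sigma}$ gives
\[
\lambda_{\sigma}^{-p} = \prod_{k\in\sigma}(k+1)^{-p} = \prod_{k\in\sigma}a_k, \quad \sigma\in\Gamma,
\]
where the empty product is understood to be $1$, consistent with $\lambda_{\emptyset}=1$. Thus the series in question becomes $\sum_{\sigma\in\Gamma}\prod_{k\in\sigma}a_k$, a sum of nonnegative terms indexed by all finite subsets of $\mathbb{N}$.

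The central step is the factorization identity
\[
\sum_{\sigma\in\Gamma}\prod_{k\in\sigma}a_k = \prod_{k=0}^{\infty}(1+a_k).
\]
I would prove this via finite truncation. For $n\in\mathbb{N}$, let $\Gamma_n$ denote the power set of $\{0,1,\ldots,n\}$; expanding by the distributive law yields the finite identity $\sum_{\sigma\in\Gamma_n}\prod_{k\in\sigma}a_k = \prod_{k=0}^{n}(1+a_k)$. Since each term $\prod_{k\in\sigma}a_k$ is nonnegative and $\Gamma_n\uparrow\Gamma$ as $n\to\infty$, the left-hand side increases to $\sum_{\sigma\in\Gamma}\prod_{k\in\sigma}a_k$ by monotone convergence, while the right-hand side increases to $\prod_{k=0}^{\infty}(1+a_k)$; this establishes the identity and, simultaneously, shows the sum is finite precisely when the product is.

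Finally I would apply the elementary inequality $1+x\leq e^{x}$, valid for all real $x$, to each factor, obtaining
\[
\prod_{k=0}^{\infty}(1+a_k) \leq \prod_{k=0}^{\infty}e^{a_k} = \exp\left[\sum_{k=0}^{\infty}(k+1)^{-p}\right] = \exp\left[\sum_{k=1}^{\infty}k^{-p}\right],
\]
after the reindexing $k\mapsto k+1$. Since $p>1$, the $p$-series $\sum_{k=1}^{\infty}k^{-p}$ converges, so the right-hand side is finite; this both yields convergence of the original series and gives the claimed bound. The main obstacle is the rigorous justification of the factorization identity, but the truncation-plus-monotone-convergence argument handles it cleanly precisely because all terms are nonnegative, thereby sidestepping any delicate rearrangement issues.
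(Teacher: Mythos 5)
Your proof is correct; the paper itself gives no proof of this lemma (it is quoted from \cite{wang-z}), but the very form of the bound $\exp\bigl[\sum_{k=1}^{\infty}k^{-p}\bigr]$ shows the intended argument is exactly yours: factor $\sum_{\sigma\in\Gamma}\prod_{k\in\sigma}(k+1)^{-p}$ into $\prod_{k=0}^{\infty}\bigl(1+(k+1)^{-p}\bigr)$ and apply $1+x\leq e^{x}$. Your truncation-and-monotone-convergence justification of the factorization is sound, since a sum of nonnegative terms over the countable set $\Gamma$ is the supremum of its partial sums over the exhausting power sets $\Gamma_n$.
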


Using the $\mathbb{N}$-valued function defined by (\ref{eq-2-2}), we can construct a chain of Hilbert spaces consisting of functionals of $Z$ as follows.
For $p\geq 0$, we put
\begin{equation}\label{eq-2-4}
  \mathcal{S}_p(Z) = \Big\{\, \xi \in \mathcal{L}^2(Z) \Bigm| \sum_{\sigma\in \Gamma}\lambda_{\sigma}^{2p}|\langle Z_{\sigma}, \xi\rangle|^{2}< \infty\,\Big\}
\end{equation}
and define
\begin{equation}\label{eq-2-5}
  \langle \xi,\eta\rangle_p
  = \sum_{\sigma\in \Gamma}\lambda_{\sigma}^{2p}\overline{\langle Z_{\sigma},\xi\rangle} \langle Z_{\sigma}, \eta\rangle,\quad
  \xi,\, \eta \in \mathcal{S}_p(Z).
\end{equation}
It is not hard to check that, with $\langle \cdot,\cdot\rangle_p$ as the inner product, $\mathcal{S}_p(Z)$ becomes a Hilbert space.
We write $\|\xi\|_{p}= \sqrt{\langle \xi,\xi\rangle_p}$ for $\xi \in \mathcal{S}_p(Z)$. Clearly, it holds that
\begin{equation}\label{eq-2-6}
  \|\xi\|_{p}^2=\sum_{\sigma\in \Gamma}\lambda_{\sigma}^{2p}|\langle Z_{\sigma}, \xi\rangle|^{2},\quad \xi \in \mathcal{S}_p(Z).
\end{equation}

\begin{lemma}\label{lem-2-3}\cite{wang-chen-1,wang-z}
For $p\geq 0$, one has $\{Z_{\sigma}\mid \sigma\in\Gamma\} \subset \mathcal{S}_p(Z)$ and moreover the system
$\{\lambda^{-p}_{\sigma}Z_{\sigma}\mid \sigma\in\Gamma\}$ forms an orthonormal basis for $\mathcal{S}_p(Z)$.
\end{lemma}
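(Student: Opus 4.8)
The plan is to verify the three defining features of an orthonormal basis for $\mathcal{S}_p(Z)$ in turn --- membership, orthonormality, and totality --- exploiting throughout the single structural fact that $\{Z_\sigma \mid \sigma\in\Gamma\}$ is an orthonormal basis of $\mathcal{L}^2(Z)$, so that $\langle Z_\sigma, Z_\tau\rangle = \delta_{\sigma\tau}$ for all $\sigma,\tau\in\Gamma$.

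First I would establish the inclusion. Fixing $\tau\in\Gamma$ and substituting $\xi = Z_\tau$ into the defining series (\ref{eq-2-6}), orthonormality in $\mathcal{L}^2(Z)$ collapses the sum to a single term, giving $\|Z_\tau\|_p^2 = \sum_{\sigma\in\Gamma}\lambda_\sigma^{2p}|\langle Z_\sigma, Z_\tau\rangle|^2 = \lambda_\tau^{2p} < \infty$. Hence $Z_\tau \in \mathcal{S}_p(Z)$ for every $\tau$, which proves $\{Z_\sigma\mid\sigma\in\Gamma\}\subset\mathcal{S}_p(Z)$, and in particular $\|\lambda_\tau^{-p}Z_\tau\|_p = 1$.

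Next I would compute the mixed inner products. Using (\ref{eq-2-5}) together with $\langle Z_\sigma, Z_\tau\rangle = \delta_{\sigma\tau}$ yields $\langle Z_\tau, Z_\rho\rangle_p = \sum_{\sigma\in\Gamma}\lambda_\sigma^{2p}\,\overline{\langle Z_\sigma, Z_\tau\rangle}\,\langle Z_\sigma, Z_\rho\rangle = \lambda_\tau^{2p}\,\delta_{\tau\rho}$. Dividing by $\lambda_\tau^p\lambda_\rho^p$ then gives $\langle \lambda_\tau^{-p}Z_\tau, \lambda_\rho^{-p}Z_\rho\rangle_p = \delta_{\tau\rho}$, so the system $\{\lambda_\sigma^{-p}Z_\sigma\mid\sigma\in\Gamma\}$ is orthonormal in $\mathcal{S}_p(Z)$.

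The remaining and most delicate step is totality. Given an arbitrary $\xi\in\mathcal{S}_p(Z)$, write $c_\sigma = \langle Z_\sigma, \xi\rangle$ for its Fourier coefficients relative to the $\mathcal{L}^2(Z)$ basis. A short computation as above shows that the $\langle\cdot,\cdot\rangle_p$-coefficient of $\xi$ against $e_\sigma := \lambda_\sigma^{-p}Z_\sigma$ equals $\langle e_\sigma, \xi\rangle_p = \lambda_\sigma^{p}c_\sigma$, so that for any finite $F\subset\Gamma$ the corresponding partial sum satisfies $\sum_{\sigma\in F}\langle e_\sigma, \xi\rangle_p\, e_\sigma = \sum_{\sigma\in F}c_\sigma Z_\sigma$. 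The point is to upgrade the $\mathcal{L}^2$-convergence of these partial sums to convergence in the stronger norm $\|\cdot\|_p$. Since $\xi - \sum_{\sigma\in F}c_\sigma Z_\sigma$ has $\mathcal{L}^2$-Fourier coefficients $c_\tau$ for $\tau\notin F$ and $0$ for $\tau\in F$, formula (\ref{eq-2-6}) gives $\|\xi - \sum_{\sigma\in F}c_\sigma Z_\sigma\|_p^2 = \sum_{\tau\notin F}\lambda_\tau^{2p}|c_\tau|^2$, which is exactly the tail of the convergent series $\sum_{\tau\in\Gamma}\lambda_\tau^{2p}|c_\tau|^2 = \|\xi\|_p^2 < \infty$ guaranteed by $\xi\in\mathcal{S}_p(Z)$. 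Letting $F$ exhaust $\Gamma$ drives this tail to zero, so the partial sums converge to $\xi$ in $\mathcal{S}_p(Z)$; hence the orthonormal system $\{\lambda_\sigma^{-p}Z_\sigma\mid\sigma\in\Gamma\}$ is total, and therefore an orthonormal basis. I expect this last tail estimate --- ensuring convergence holds in the finer $p$-norm rather than merely in $\mathcal{L}^2(Z)$ --- to be the only real content, and it is handled entirely by the summability condition built into the definition (\ref{eq-2-4}) of $\mathcal{S}_p(Z)$.
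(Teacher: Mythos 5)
Your proof is correct; the paper itself imports Lemma~\ref{lem-2-3} from \cite{wang-chen-1,wang-z} without proof, so there is nothing internal to compare against, but your argument is the natural one. All three steps check out: membership and orthonormality follow from collapsing the defining sums via $\langle Z_\sigma, Z_\tau\rangle=\delta_{\sigma\tau}$, and the totality step correctly reduces to the vanishing of the tail $\sum_{\tau\notin F}\lambda_\tau^{2p}|c_\tau|^2$ of the convergent series defining $\|\xi\|_p^2$, which is exactly the summability built into (\ref{eq-2-4}).
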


It is easy to see that $\lambda_{\sigma}\geq 1$ for all $\sigma\in \Gamma$. This implies that $\|\cdot\|_p \leq \|\cdot\|_q$
and $\mathcal{S}_q(Z)\subset \mathcal{S}_p(Z)$
whenever $0\leq p \leq q$. Thus we actually get a chain of Hilbert spaces of functionals of $Z$:
\begin{equation}\label{eq-2-7}
 \cdots \subset \mathcal{S}_{p+1}(Z) \subset \mathcal{S}_p(Z)\subset  \cdots \subset \mathcal{S}_1(Z) \subset \mathcal{S}_0(Z)=\mathcal{L}^2(Z).
\end{equation}
We now put
\begin{equation}\label{eq-2-8}
  \mathcal{S}(Z)=\bigcap ^{\infty}_{p=0}\mathcal{S}_{p}(Z)
\end{equation}
and endow it with the topology generated by the norm sequence $\{\|\cdot \|_{p}\}_{p\geq 0}$.
Note that, for each $ p\geq 0$, $\mathcal{S}_p(Z)$ is just the completion of $\mathcal{S}(Z)$ with respect to $\|\cdot\|_{p}$.
Thus $\mathcal{S}(Z)$ is a countably-Hilbert space \cite{becnel, gelfand}.
The next lemma, however, shows that $\mathcal{S}(Z)$ even has a much better property.

\begin{lemma}\label{lem-2-4}\cite{wang-chen-1,wang-z}
The space $\mathcal{S}(Z)$ is a nuclear space, namely for any $p\geq 0$,
there exists $q> p$ such that the inclusion mapping $i_{pq}\colon \mathcal{S}_{q}(Z) \rightarrow \mathcal{S}_p(Z)$
defined by $i_{pq}(\xi)=\xi$ is a Hilbert-Schmidt operator.
\end{lemma}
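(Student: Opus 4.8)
The plan is to compute the Hilbert--Schmidt norm of the inclusion $i_{pq}$ directly against the natural orthonormal bases supplied by Lemma~\ref{lem-2-3}, and then to choose $q$ large enough that the resulting series converges by Lemma~\ref{lem-2-2}.

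First I would recall that, for a bounded operator $T$ between two Hilbert spaces, its Hilbert--Schmidt norm $\|T\|_{\mathrm{HS}}$ satisfies $\|T\|_{\mathrm{HS}}^2 = \sum_{\alpha}\|Te_{\alpha}\|^2$ for any orthonormal basis $\{e_{\alpha}\}$ of the domain, the value of the sum being independent of the basis chosen. By Lemma~\ref{lem-2-3}, the system $\{\lambda_{\sigma}^{-q}Z_{\sigma}\mid \sigma\in\Gamma\}$ is an orthonormal basis of $\mathcal{S}_q(Z)$. Taking this as the basis of the domain, I would write
\[
  \|i_{pq}\|_{\mathrm{HS}}^2 = \sum_{\sigma\in\Gamma}\big\|i_{pq}(\lambda_{\sigma}^{-q}Z_{\sigma})\big\|_p^2 = \sum_{\sigma\in\Gamma}\lambda_{\sigma}^{-2q}\,\|Z_{\sigma}\|_p^2.
\]

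The next step is to evaluate $\|Z_{\sigma}\|_p^2$. Using the norm formula (\ref{eq-2-6}) together with the orthonormality of the canonical system $\{Z_{\tau}\mid \tau\in\Gamma\}$ in $\mathcal{L}^2(Z)$ (Lemma~\ref{lem-2-1}), only the term $\tau=\sigma$ survives in the defining sum, so that $\|Z_{\sigma}\|_p^2 = \lambda_{\sigma}^{2p}$. Substituting this into the previous display gives
\[
  \|i_{pq}\|_{\mathrm{HS}}^2 = \sum_{\sigma\in\Gamma}\lambda_{\sigma}^{2(p-q)} = \sum_{\sigma\in\Gamma}\lambda_{\sigma}^{-2(q-p)}.
\]

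Finally, given $p\geq 0$ I would take $q=p+1$ (indeed any $q$ with $q-p>\tfrac12$ suffices), so that $2(q-p)>1$. Lemma~\ref{lem-2-2} then guarantees $\sum_{\sigma\in\Gamma}\lambda_{\sigma}^{-2(q-p)}<\infty$, whence $\|i_{pq}\|_{\mathrm{HS}}<\infty$ and $i_{pq}$ is Hilbert--Schmidt; as $p$ was arbitrary, $\mathcal{S}(Z)$ is nuclear. I expect no serious obstacle in this argument: the only points demanding care are confirming that the Hilbert--Schmidt norm may be computed against the particular scaled basis $\{\lambda_{\sigma}^{-q}Z_{\sigma}\}$, and keeping the exponent bookkeeping straight so that the convergence threshold $2(q-p)>1$ required by Lemma~\ref{lem-2-2} is actually met.
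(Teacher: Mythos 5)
Your computation is correct and is exactly the standard argument for this lemma (the paper itself only cites \cite{wang-chen-1,wang-z} rather than reproving it): the Hilbert--Schmidt norm against the basis $\{\lambda_{\sigma}^{-q}Z_{\sigma}\}$ reduces to $\sum_{\sigma\in\Gamma}\lambda_{\sigma}^{-2(q-p)}$, and Lemma~\ref{lem-2-2} closes the argument once $2(q-p)>1$. The choice $q=p+1$ and the identity $\|Z_{\sigma}\|_p^2=\lambda_{\sigma}^{2p}$ are both right, so there is nothing to fix.
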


For $p\geq 0$, we denote by $\mathcal{S}_p^*(Z)$ the dual of $\mathcal{S}_p(Z)$ and $\|\cdot\|_{-p}$
the norm of $\mathcal{S}_p^*(Z)$. Then $\mathcal{S}_p^*(Z)\subset \mathcal{S}_q^*(Z)$ and
$\|\cdot\|_{-p} \geq \|\cdot\|_{-q}$ whenever $0\leq p \leq q$.
The lemma below is then an immediate consequence of the general theory of countably-Hilbert spaces (see, e.g., \cite{becnel} or \cite{gelfand}).

\begin{lemma}\label{lem-2-5}\cite{wang-chen-1,wang-z}
Let $\mathcal{S}^*(Z)$ the dual of $\mathcal{S}(Z)$ and endow it with the strong topology. Then
\begin{equation}\label{eq-2-9}
  \mathcal{S}^*(Z)=\bigcup_{p=0}^{\infty}\mathcal{S}_p^*(Z)
\end{equation}
and moreover the inductive limit topology over $\mathcal{S}^*(Z)$ given by space sequence $\{\mathcal{S}_p^*(Z)\}_{p\geq 0}$
coincides with the strong topology.
\end{lemma}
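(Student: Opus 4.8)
The plan is to prove the set-theoretic identity (\ref{eq-2-9}) first and then compare the two topologies, working throughout with the canonical dual pairing between $\mathcal{S}^*(Z)$ and $\mathcal{S}(Z)$. The crucial structural input is Lemma~\ref{lem-2-3}: each $\mathcal{S}_p(Z)$ is a Hilbert space, hence reflexive, and contains $\mathcal{S}(Z)$ as a dense subspace. Consequently $\mathcal{S}_p(Z)$ is recovered as the dual of $\mathcal{S}_p^*(Z)$, and the polar of the $\|\cdot\|_{-p}$-unit ball is exactly the $\|\cdot\|_p$-unit ball; I will use these facts repeatedly.

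For the inclusion $\bigcup_p \mathcal{S}_p^*(Z) \subseteq \mathcal{S}^*(Z)$, I note that the inclusion $\mathcal{S}(Z) \hookrightarrow \mathcal{S}_p(Z)$ is continuous with dense range, so any $f \in \mathcal{S}_p^*(Z)$ restricts to an element of $\mathcal{S}^*(Z)$, the restriction being injective by density. For the reverse inclusion I would use the standard description of continuity on a countably-normed space: since the norms $\|\cdot\|_p$ increase with $p$ by (\ref{eq-2-7}), the single-index balls $\{\|\cdot\|_p < \varepsilon\}$ form a base of neighborhoods of $0$ in $\mathcal{S}(Z)$, so continuity of $f \in \mathcal{S}^*(Z)$ yields some $p$ and $C>0$ with $|f(\xi)| \le C\|\xi\|_p$ for all $\xi \in \mathcal{S}(Z)$. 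Density of $\mathcal{S}(Z)$ in $\mathcal{S}_p(Z)$ then extends $f$ uniquely to $\mathcal{S}_p^*(Z)$, giving $\mathcal{S}^*(Z) \subseteq \bigcup_p \mathcal{S}_p^*(Z)$.

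For the topologies, write $\tau_{\mathrm{ind}}$ for the inductive limit topology and $\tau_{\beta}$ for the strong topology. One inclusion is routine: for a bounded set $B \subseteq \mathcal{S}(Z)$ the number $C_p := \sup_{\xi \in B}\|\xi\|_p$ is finite for every $p$, and the estimate $\sup_{\xi\in B}|f(\xi)| \le C_p\|f\|_{-p}$ shows each inclusion $(\mathcal{S}_p^*(Z),\|\cdot\|_{-p}) \hookrightarrow (\mathcal{S}^*(Z),\tau_\beta)$ is continuous; as $\tau_{\mathrm{ind}}$ is the finest locally convex topology making these inclusions continuous, $\tau_\beta \le \tau_{\mathrm{ind}}$. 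The substance of the lemma is the reverse inclusion, which I expect to be the main obstacle. My plan is a polar computation: take an absolutely convex, $\tau_{\mathrm{ind}}$-closed neighborhood $V$ of $0$. By definition of the inductive limit topology $V \supseteq \delta_p B_{-p}$ for some $\delta_p>0$, where $B_{-p}$ is the $\|\cdot\|_{-p}$-unit ball; passing to polars in $\mathcal{S}(Z)$ gives $V^\circ \subseteq \delta_p^{-1}\{\xi : \|\xi\|_p \le 1\}$ for every $p$, so $V^\circ$ is bounded in each norm and hence a bounded subset of $\mathcal{S}(Z)$. It then remains to show $V = (V^\circ)^\circ$, which exhibits $V$ as a basic $\tau_\beta$-neighborhood and yields $\tau_{\mathrm{ind}} \le \tau_\beta$.

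The delicate point inside this last step is the identity $V = (V^\circ)^\circ$, i.e. that the convex set $V$ is closed for the weak topology $\sigma(\mathcal{S}^*(Z),\mathcal{S}(Z))$. By the bipolar theorem this holds once $\tau_{\mathrm{ind}}$ is compatible with the dual pair $\langle \mathcal{S}^*(Z),\mathcal{S}(Z)\rangle$, that is, once $(\mathcal{S}^*(Z),\tau_{\mathrm{ind}})' = \mathcal{S}(Z)$. I would verify this by showing that a $\tau_{\mathrm{ind}}$-continuous functional restricts on each $\mathcal{S}_p^*(Z)$ to an element of $\mathcal{S}_p^{**}(Z)=\mathcal{S}_p(Z)$ (here reflexivity of the Hilbert spaces $\mathcal{S}_p(Z)$ is essential, and is what ultimately makes the argument succeed), and that these restrictions are compatible and hence arise from a single element of $\bigcap_p \mathcal{S}_p(Z) = \mathcal{S}(Z)$; Mazur's theorem then upgrades $\tau_{\mathrm{ind}}$-closedness of $V$ to weak closedness. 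Alternatively, since Lemma~\ref{lem-2-4} makes $\mathcal{S}(Z)$ nuclear, the inclusions $\mathcal{S}_p^*(Z) \hookrightarrow \mathcal{S}_q^*(Z)$ are the transposes of the Hilbert--Schmidt maps $i_{pq}$ and are therefore compact, so $\{\mathcal{S}_p^*(Z)\}$ forms a compact inductive system; the coincidence of $\tau_{\mathrm{ind}}$ and $\tau_\beta$ is then the standard structure theorem for strong duals of countably-Hilbert nuclear spaces, which one may simply invoke.
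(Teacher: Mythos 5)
Your proposal is correct, but it is worth noting that the paper offers no proof of this lemma at all: it simply declares the statement ``an immediate consequence of the general theory of countably-Hilbert spaces'' and points to the references \cite{becnel} and \cite{gelfand}. What you have written is, in effect, a correct self-contained unwinding of that general theory. The set-theoretic identity via the estimate $|f(\xi)|\le C\|\xi\|_p$ and density of $\mathcal{S}(Z)$ in $\mathcal{S}_p(Z)$ is standard and fine; the easy inclusion $\tau_\beta\le\tau_{\mathrm{ind}}$ is fine; and your polar argument for the hard inclusion is the right mechanism. You also correctly identify where the real content lies: the identity $(\mathcal{S}^*(Z),\tau_{\mathrm{ind}})'=\mathcal{S}(Z)$, which you obtain from reflexivity of the Hilbert spaces $\mathcal{S}_p(Z)$ together with $\bigcap_p\mathcal{S}_p(Z)=\mathcal{S}(Z)$, and which via Mazur and the bipolar theorem gives $V=(V^\circ)^\circ$ for a closed absolutely convex $\tau_{\mathrm{ind}}$-neighborhood $V$. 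This emphasis is not pedantry --- the analogous statement fails for non-distinguished Fr\'echet spaces, so reflexivity of the local Hilbert spaces genuinely carries the proof; your argument isolates exactly that. Two small remarks: your alternative route through nuclearity (Lemma~\ref{lem-2-4}) also works but is stronger than needed, since the conclusion holds for every countably-Hilbert space regardless of nuclearity; and when you say the polar of the $\|\cdot\|_{-p}$-unit ball is the $\|\cdot\|_p$-unit ball, you should say (as you in fact use later) that its trace on $\mathcal{S}(Z)$ is $\{\xi\in\mathcal{S}(Z):\|\xi\|_p\le 1\}$, the polar being computed inside $\mathcal{S}(Z)$.
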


We mention that, by identifying $\mathcal{L}^2(Z)$ with its dual, one comes to a Gel'fand triple
\begin{equation}\label{eq-2-10}
\mathcal{S}(Z)\subset \mathcal{L}^2(Z)\subset \mathcal{S}^*(Z),
\end{equation}
which we refer to as the Gel'fand triple associated with the discrete-time normal noise $Z$.

\begin{theorem}\label{thr-2-6}\cite{wang-chen-1}
The system $\{Z_{\sigma} \mid \sigma \in \Gamma\}$ is contained in $\mathcal{S}(Z)$ and moreover it forms a basis for $\mathcal{S}(Z)$ in the sense that
\begin{equation}\label{eq-2-11}
  \xi = \sum_{\sigma \in \Gamma} \langle Z_{\sigma}, \xi\rangle Z_{\sigma}, \quad \xi \in \mathcal{S}(Z),
\end{equation}
where $\langle\cdot,\cdot\rangle$ is the inner product of $\mathcal{L}^2(Z)$ and the series converges in the topology of $\mathcal{S}(Z)$.
\end{theorem}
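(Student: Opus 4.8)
The plan is to treat the two assertions separately, deriving both from the orthonormal-basis structure of the spaces $\mathcal{S}_p(Z)$ supplied by Lemma~\ref{lem-2-3}.

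For the containment, I would argue directly from the definition (\ref{eq-2-8}). Lemma~\ref{lem-2-3} already gives $Z_\sigma \in \mathcal{S}_p(Z)$ for every $p \geq 0$ and every $\sigma \in \Gamma$; since $\mathcal{S}(Z) = \bigcap_{p=0}^\infty \mathcal{S}_p(Z)$, this yields $Z_\sigma \in \mathcal{S}(Z)$ at once, with nothing further to check.

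For the expansion, I would fix $\xi \in \mathcal{S}(Z)$ and an arbitrary $p \geq 0$, so that $\xi \in \mathcal{S}_p(Z)$, and expand $\xi$ against the orthonormal basis $\{\lambda_\sigma^{-p} Z_\sigma \mid \sigma \in \Gamma\}$ of $\mathcal{S}_p(Z)$ furnished by Lemma~\ref{lem-2-3}. The crucial computation is to evaluate the $\mathcal{S}_p$-Fourier coefficients: using (\ref{eq-2-5}) together with the $\mathcal{L}^2$-orthonormality of the $Z_\sigma$ (Lemma~\ref{lem-2-1}), one finds $\langle Z_\tau, \xi\rangle_p = \lambda_\tau^{2p}\langle Z_\tau, \xi\rangle$, and hence $\langle \lambda_\sigma^{-p} Z_\sigma, \xi\rangle_p = \lambda_\sigma^{p}\langle Z_\sigma, \xi\rangle$. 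Substituting this into the basis expansion, the two powers of $\lambda_\sigma$ cancel and the expansion collapses to
\begin{equation*}
  \xi = \sum_{\sigma \in \Gamma}\langle Z_\sigma, \xi\rangle Z_\sigma,
\end{equation*}
with convergence in $\|\cdot\|_p$. The decisive point is that the coefficients $\langle Z_\sigma, \xi\rangle$ produced this way are the ordinary $\mathcal{L}^2$-Fourier coefficients and do not depend on $p$. Since the topology of $\mathcal{S}(Z)$ is generated by the norm sequence $\{\|\cdot\|_p\}_{p\geq 0}$, it then remains to promote this $p$-by-$p$ convergence to convergence in $\mathcal{S}(Z)$. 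Here I would invoke that an orthonormal-basis expansion converges unconditionally, so for each $p$ the net of finite partial sums $\sum_{\sigma \in F}\langle Z_\sigma, \xi\rangle Z_\sigma$ (indexed by finite $F \subset \Gamma$, directed by inclusion) converges to $\xi$ in $\|\cdot\|_p$; as the index net and the coefficients are the same for all $p$, this single net converges to $\xi$ simultaneously in every $\|\cdot\|_p$, which is exactly convergence in the topology of $\mathcal{S}(Z)$.

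The main obstacle is precisely this last reconciliation across the scale of norms: one must ensure that a single choice of partial sums works for all $p$ at once, rather than only along a $p$-dependent exhaustion of $\Gamma$. This is secured by the unconditional (net) convergence of orthonormal expansions and by the $p$-independence of the coefficients established in the preceding step; once both are in hand the argument is immediate.
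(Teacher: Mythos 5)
Your argument is correct. Note that the paper itself states Theorem~\ref{thr-2-6} as a cited result from \cite{wang-chen-1} and gives no proof, so there is nothing in-text to compare against; your derivation is the natural self-contained one from the ingredients the paper does supply. The containment follows exactly as you say from Lemma~\ref{lem-2-3} and the definition (\ref{eq-2-8}) of $\mathcal{S}(Z)$ as an intersection. For the expansion, your key computation $\langle \lambda_\sigma^{-p}Z_\sigma,\xi\rangle_p=\lambda_\sigma^{p}\langle Z_\sigma,\xi\rangle$ is right (using (\ref{eq-2-5}) and the $\mathcal{L}^2$-orthonormality from Lemma~\ref{lem-2-1}), the cancellation of the powers of $\lambda_\sigma$ makes the coefficients $p$-independent, and your handling of the final step is the one place where care is genuinely needed: since $\Gamma$ carries no canonical enumeration, convergence must be read as convergence of the net of partial sums over finite subsets, and the unconditional convergence of orthonormal expansions in each Hilbert space $\mathcal{S}_p(Z)$, combined with the fact that it is the same net and the same coefficients for every $p$, gives convergence in the projective-limit topology of $\mathcal{S}(Z)$. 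No gaps.
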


\begin{definition}\label{def-2-2}\cite{wang-chen-1,wang-z}
Elements of $\mathcal{S}^*(Z)$ are called generalized functionals of $Z$, while elements of $\mathcal{S}(Z)$ are called testing functionals of $Z$.
\end{definition}

Thus, $\mathcal{S}^*(Z)$ and $\mathcal{S}(Z)$ can be accordingly called the generalized functional space and the testing functional space of $Z$,
respectively. It turns out \cite{wang-chen-1} that $\mathcal{S}^*(Z)$ can accommodate
many quantities of theoretical interest that can not be covered by $\mathcal{L}^2(Z)$.

In the following, we denote by $\langle\!\langle \cdot,\cdot\rangle\!\rangle$
the canonical bilinear form on $\mathcal{S}^*(Z)\times \mathcal{S}(Z)$ given by
\begin{equation}\label{eq-2-12}
  \langle\!\langle \Phi,\xi\rangle\!\rangle = \Phi(\xi),\quad \Phi\in \mathcal{S}^*(Z),\, \xi\in \mathcal{S}(Z).
\end{equation}
Note that $\langle\!\langle \cdot,\cdot\rangle\!\rangle$ is different from the inner product
$\langle\cdot,\cdot\rangle$ of $\mathcal{L}^2(Z)$.

\begin{definition}\label{def-2-3}\cite{wang-chen-1}
For $\Phi \in \mathcal{S}^*(Z)$, its Fock transform is the function $\widehat{\Phi}$ on $\Gamma$ given by
\begin{equation}\label{eq-2-13}
  \widehat{\Phi}(\sigma) = \langle\!\langle \Phi, Z_{\sigma}\rangle\!\rangle,\quad \sigma \in \Gamma,
\end{equation}
where $\langle\!\langle \cdot,\cdot\rangle\!\rangle$ is the canonical bilinear form.
\end{definition}

It is easy to verify that,
for $\Phi$, $\Psi \in \mathcal{S}^*(Z)$, $\Phi=\Psi$ if and only if $\widehat{\Phi}=\widehat{\Psi}$.
Thus a generalized functional of $Z$ is completely determined by its Fock transform.
The following theorem characterizes generalized functionals of $Z$ through their Fock transforms.

\begin{theorem}\label{thr-2-7}\cite{wang-chen-1}
Let $F$ be a function on $\Gamma$. Then $F$ is the Fock transform of an element $\Phi$ of $\mathcal{S}^*(Z)$ if and only if it satisfies
\begin{equation}\label{eq-2-14}
  |F(\sigma)| \leq C\lambda_{\sigma}^p,\quad \sigma \in \Gamma
\end{equation}
for some constants $C\geq 0$ and $p\geq 0$.
In that case, for $q> p+\frac{1}{2}$, one has
\begin{equation}\label{eq-2-15}
  \|\Phi\|_{-q} \leq C\bigg[\sum_{\sigma \in \Gamma}\lambda_{\sigma}^{-2(q-p)}\bigg]^{\frac{1}{2}}
\end{equation}
and in particular $\Phi \in \mathcal{S}_q^*(Z)$.
\end{theorem}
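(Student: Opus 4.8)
The statement is an equivalence, so the plan is to treat the two implications separately, with the quantitative bound (\ref{eq-2-15}) emerging as a byproduct of the proof of sufficiency.

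For the necessity (``only if'') direction, suppose $F = \widehat{\Phi}$ for some $\Phi \in \mathcal{S}^*(Z)$. By Lemma~\ref{lem-2-5} we have $\mathcal{S}^*(Z) = \bigcup_{p\geq 0}\mathcal{S}_p^*(Z)$, so $\Phi \in \mathcal{S}_p^*(Z)$ for some $p \geq 0$; that is, $\Phi$ is a bounded linear functional on $\mathcal{S}_p(Z)$. From Lemma~\ref{lem-2-3}, or directly from (\ref{eq-2-6}) together with the orthonormality of $\{Z_\sigma\}$, one reads off $\|Z_\sigma\|_p = \lambda_\sigma^p$ for every $\sigma \in \Gamma$. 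Combining this with the definition (\ref{eq-2-13}) of the Fock transform and the defining inequality of the dual norm gives $|F(\sigma)| = |\langle\!\langle \Phi, Z_\sigma\rangle\!\rangle| \leq \|\Phi\|_{-p}\,\|Z_\sigma\|_p = \|\Phi\|_{-p}\,\lambda_\sigma^p$, which is exactly (\ref{eq-2-14}) with $C = \|\Phi\|_{-p}$.

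For the sufficiency (``if'') direction, I would construct $\Phi$ by hand. Assuming $|F(\sigma)| \leq C\lambda_\sigma^p$, I would define a linear functional on $\mathcal{S}(Z)$ by $\Phi(\xi) = \sum_{\sigma\in\Gamma}F(\sigma)\langle Z_\sigma, \xi\rangle$, the natural candidate suggested by the basis expansion (\ref{eq-2-11}); this is linear in $\xi$ by the linearity of $\langle Z_\sigma,\cdot\rangle$ in its second argument. Fix $q > p + \tfrac12$. For $\xi \in \mathcal{S}_q(Z)$ I would estimate the series by inserting the factor $\lambda_\sigma^{p-q}\lambda_\sigma^{q-p}$ and applying the Cauchy--Schwarz inequality, which yields $|\Phi(\xi)| \leq C\bigl(\sum_{\sigma}\lambda_\sigma^{-2(q-p)}\bigr)^{1/2}\|\xi\|_q$. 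Since $2(q-p) > 1$, Lemma~\ref{lem-2-2} guarantees that the bracketed series converges, so $\Phi$ is continuous on $\mathcal{S}_q(Z)$ with the norm estimate (\ref{eq-2-15}); by Lemma~\ref{lem-2-5} this gives $\Phi \in \mathcal{S}_q^*(Z) \subset \mathcal{S}^*(Z)$. It remains to check $\widehat{\Phi} = F$, which follows at once from orthonormality $\langle Z_\tau, Z_\sigma\rangle = \delta_{\tau\sigma}$: evaluating the defining series at $Z_\sigma$ collapses it to $F(\sigma)$.

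The main obstacle is the convergence issue in the sufficiency direction: one must sacrifice a factor $\lambda_\sigma^{p-q}$ in order to apply Cauchy--Schwarz, and the entire argument hinges on the summability of $\sum_{\sigma}\lambda_\sigma^{-2(q-p)}$ for $q - p > \tfrac12$. This is precisely where the quantitative content of Lemma~\ref{lem-2-2} is indispensable, and it also explains the threshold $q > p+\tfrac12$ appearing in the statement. A secondary point to verify is that the formula for $\Phi(\xi)$ genuinely defines an element of the dual $\mathcal{S}^*(Z)$; once $\|\cdot\|_q$-continuity is established, the continuity of the inclusion $\mathcal{S}(Z)\hookrightarrow \mathcal{S}_q(Z)$ settles this immediately.
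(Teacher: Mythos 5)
Your proof is correct. Note that the paper itself gives no proof of this theorem --- it is imported from \cite{wang-chen-1} --- so there is nothing internal to compare against; your argument is the standard one for such characterization theorems and it goes through. Both directions check out: the necessity follows from $\|Z_\sigma\|_p=\lambda_\sigma^p$ (Lemma~\ref{lem-2-3}) and the dual-norm inequality, and the sufficiency from the Cauchy--Schwarz splitting $\lambda_\sigma^{p-q}\lambda_\sigma^{q-p}$ together with the summability of $\sum_\sigma\lambda_\sigma^{-2(q-p)}$ for $2(q-p)>1$ from Lemma~\ref{lem-2-2}. The only point worth making explicit is that the bound (\ref{eq-2-15}) is asserted for \emph{the} $\Phi$ with $\widehat{\Phi}=F$, whereas you derive it for the functional you construct; this is closed by the injectivity of the Fock transform, which the paper records just before the theorem.
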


The theorem below describes the regularity of generalized functionals of $Z$ via their Fock transforms.

\begin{theorem}\label{thr-2-8}
Let $\Phi \in \mathcal{S}^*(Z)$ and $p\geq 0$. Then $\Phi \in \mathcal{S}_p^*(Z)$ if and only if
\begin{equation}\label{eq-2-16}
  \sum_{\sigma \in \Gamma}\lambda_{\sigma}^{-2p}\big|\widehat{\Phi}(\sigma)\big|^2<\infty.
\end{equation}
In that case, the norm $\|\Phi\|_{-p}$ of $\Phi$ in $\mathcal{S}_p^*(Z)$ satisfies
\begin{equation}\label{eq-2-17}
  \|\Phi\|_{-p}^2= \sum_{\sigma \in \Gamma}\lambda_{\sigma}^{-2p}\big|\widehat{\Phi}(\sigma)\big|^2.
\end{equation}
\end{theorem}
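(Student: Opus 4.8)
The plan is to exploit the Hilbert-space structure underlying each $\mathcal{S}_p^*(Z)$ together with the defining identity $\widehat{\Phi}(\sigma)=\langle\!\langle\Phi,Z_\sigma\rangle\!\rangle$. By Lemma~\ref{lem-2-3}, the system $\{\lambda_\sigma^{-p}Z_\sigma\mid\sigma\in\Gamma\}$ is an orthonormal basis of the Hilbert space $\mathcal{S}_p(Z)$, and $\mathcal{S}_p^*(Z)$ is by definition its dual, with norm $\|\cdot\|_{-p}$. Since each $Z_\sigma$ lies in $\mathcal{S}(Z)\subset\mathcal{S}_p(Z)$, evaluating a functional on the basis vectors gives $\Phi(\lambda_\sigma^{-p}Z_\sigma)=\lambda_\sigma^{-p}\widehat{\Phi}(\sigma)$, so the series in (\ref{eq-2-16}) is exactly $\sum_{\sigma\in\Gamma}\big|\Phi(\lambda_\sigma^{-p}Z_\sigma)\big|^2$, the sum of squared moduli of $\Phi$ over an orthonormal basis. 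This observation is what links the two sides of the claimed equivalence.

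For the forward direction I would argue as follows. Suppose $\Phi\in\mathcal{S}_p^*(Z)$. Then $\Phi$ is a bounded linear functional on the Hilbert space $\mathcal{S}_p(Z)$, and the standard Parseval identity for the dual of a Hilbert space (a consequence of the Riesz representation theorem) gives $\|\Phi\|_{-p}^2=\sum_{\sigma\in\Gamma}\big|\Phi(\lambda_\sigma^{-p}Z_\sigma)\big|^2$. By the observation above this equals $\sum_{\sigma\in\Gamma}\lambda_\sigma^{-2p}\big|\widehat{\Phi}(\sigma)\big|^2$, which is therefore finite. This single computation yields both the necessity of (\ref{eq-2-16}) and the norm formula (\ref{eq-2-17}).

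For the reverse direction, suppose $\Phi\in\mathcal{S}^*(Z)$ satisfies (\ref{eq-2-16}); I would show that $\Phi$ is $\|\cdot\|_p$-bounded on the dense subspace $\mathcal{S}(Z)$. Given $\xi\in\mathcal{S}(Z)$, Theorem~\ref{thr-2-6} expands $\xi=\sum_\sigma\langle Z_\sigma,\xi\rangle Z_\sigma$ with convergence in the topology of $\mathcal{S}(Z)$; since $\Phi$ is continuous on $\mathcal{S}(Z)$, applying $\Phi$ termwise gives $\Phi(\xi)=\sum_\sigma\langle Z_\sigma,\xi\rangle\widehat{\Phi}(\sigma)$. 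Writing each summand as $(\lambda_\sigma^p\langle Z_\sigma,\xi\rangle)(\lambda_\sigma^{-p}\widehat{\Phi}(\sigma))$ and applying the Cauchy-Schwarz inequality, together with (\ref{eq-2-6}), bounds $|\Phi(\xi)|$ by $\big[\sum_\sigma\lambda_\sigma^{-2p}|\widehat{\Phi}(\sigma)|^2\big]^{1/2}\,\|\xi\|_p$. Hence $\Phi$ extends uniquely to a bounded linear functional on the completion $\mathcal{S}_p(Z)$ of $\mathcal{S}(Z)$, that is, $\Phi\in\mathcal{S}_p^*(Z)$, with $\|\Phi\|_{-p}$ controlled by the same square root; combined with the forward direction this re-confirms (\ref{eq-2-17}).

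The main obstacle is not any single hard estimate but the careful bookkeeping in the reverse direction: justifying the termwise application of $\Phi$ to the infinite expansion (which relies on convergence in the nuclear topology of $\mathcal{S}(Z)$ and the continuity of $\Phi$), and confirming that the functional obtained by extension to $\mathcal{S}_p(Z)$ genuinely coincides with the given generalized functional $\Phi$ under the identifications of Lemma~\ref{lem-2-5}. One must also keep track of the convention that $\langle\cdot,\cdot\rangle$ is conjugate-linear in its first argument when invoking Riesz and Parseval, though this does not affect the moduli appearing in the final formula.
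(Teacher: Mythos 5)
Your proof is correct and follows essentially the same route as the paper: the ``only if'' direction via the Riesz representation theorem (your Parseval phrasing over the basis $\{\lambda_\sigma^{-p}Z_\sigma\}$ is just the paper's computation with $\Phi=\langle\eta,\cdot\rangle_p$ restated), and the ``if'' direction via the expansion of Theorem~\ref{thr-2-6}, Cauchy--Schwarz, and density of $\mathcal{S}(Z)$ in $\mathcal{S}_p(Z)$. No gaps.
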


\begin{proof}
The ``only if'' part. By the well-known Riesz representation theorem \cite{mus}, there exists a unique $\eta \in \mathcal{S}_p(Z)$
such that $\|\eta\|_p = \|\Phi\|_{-p}$ and
\begin{equation*}
  \Phi(\xi) = \langle \eta, \xi\rangle_p,\quad \xi \in \mathcal{S}_p(Z).
\end{equation*}
Thus
\begin{equation*}
   \sum_{\sigma \in \Gamma}\lambda_{\sigma}^{-2p}\big|\widehat{\Phi}(\sigma)\big|^2
   =\sum_{\sigma \in \Gamma}\lambda_{\sigma}^{-2p}|\langle Z_{\sigma}, \eta\rangle_p|^2
   =\sum_{\sigma \in \Gamma}\lambda_{\sigma}^{2p}|\langle Z_{\sigma}, \eta\rangle|^2
   =\|\eta\|_p^2
   =\|\Phi\|_{-p}^2,
\end{equation*}
which implies (\ref{eq-2-16}) and (\ref{eq-2-17}).

The ``if'' part. For each $\xi \in \mathcal{S}(Z)$, by using Theorem~\ref{thr-2-6}, we have
\begin{equation*}
\begin{split}
  |\Phi(\xi)|
  =\Big|\sum_{\sigma \in \Gamma}\langle Z_{\sigma}, \xi\rangle\Phi(Z_{\sigma})\Big|
  &= \Big|\sum_{\sigma \in \Gamma}\langle Z_{\sigma}, \xi\rangle\widehat{\Phi}(\sigma)\Big|\\
  &\leq \Big[\sum_{\sigma \in \Gamma} \lambda_{\sigma}^{2p}|\langle Z_{\sigma}, \xi\rangle|^2\Big]^{\frac{1}{2}}
        \Big[\sum_{\sigma \in \Gamma} \lambda_{\sigma}^{-2p}|\widehat{\Phi}(\sigma)|^2\Big]^{\frac{1}{2}}\\
  & = \|\xi\|_p\Big[\sum_{\sigma \in \Gamma} \lambda_{\sigma}^{-2p}|\widehat{\Phi}(\sigma)|^2\Big]^{\frac{1}{2}}.
\end{split}
\end{equation*}
Thus $\Phi$ is a bounded functional on the space $(\mathcal{S}(Z),\|\cdot\|_p)$, which implies $\Phi \in \mathcal{S}_p^*(Z)$
since $\mathcal{S}(Z)$ is dense in $\mathcal{S}_p(Z)$.
\end{proof}

\begin{remark}\label{rem-2-1}
There exists a continuous linear mapping $\textit{\textsf{R}}\colon \mathcal{L}^2(Z) \rightarrow \mathcal{S}^*(Z)$ such that
\begin{equation}\label{eq-2-18}
  \langle\!\langle\textit{\textsf{R}}\eta, \xi \rangle\!\rangle
= \langle \eta, \xi\rangle,\quad \eta \in \mathcal{L}^2(Z),\, \xi \in \mathcal{S}(Z),
\end{equation}
where $\langle\!\langle\cdot, \cdot \rangle\!\rangle$ is the canonical bilinear form on $\mathcal{S}^*(Z)\times \mathcal{S}(Z)$.
We call $\textit{\textsf{R}}$ the Riesz mapping.
\end{remark}

\begin{theorem}\label{thr-2-9}\cite{wang-chen-2}
Let $\Phi$, $\Phi_n \in \mathcal{S}^*(Z)$, $n\geq 1$, be generalized functionals of $Z$.  Then the sequence $(\Phi_n)$ converges strongly
to $\Phi$ in $\mathcal{S}^*(Z)$ if and only if it satisfies:
\begin{enumerate}
  \item[(1)] $\widehat{\Phi_n}(\sigma)\rightarrow \widehat{\Phi}(\sigma)$ for all $\sigma\in \Gamma$;
  \item[(2)] There are constants $C\geq 0$ and $p\geq 0$ such that
\begin{equation}\label{eq-2-19}
   \sup_{n\geq 1}\big|\widehat{\Phi_n}(\sigma)\big| \leq C\lambda_{\sigma}^p,\quad \sigma \in \Gamma.
\end{equation}
\end{enumerate}
\end{theorem}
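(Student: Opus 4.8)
The plan is to treat the two implications separately, using the Fock-transform characterisation of the step norms (Theorem~\ref{thr-2-8}) and the growth characterisation of generalized functionals (Theorem~\ref{thr-2-7}) as the main bridges between the analytic condition (\ref{eq-2-19}) and the topological notion of strong convergence. Throughout I would rely on two basic facts: that $Z_{\sigma}\in\mathcal{S}(Z)$ with $\|Z_{\sigma}\|_p=\lambda_{\sigma}^p$ by Lemma~\ref{lem-2-3}, so that evaluating a generalized functional at $Z_{\sigma}$ reproduces its Fock transform; and that, by Lemma~\ref{lem-2-5}, the inclusion $\mathcal{S}_r^*(Z)\hookrightarrow\mathcal{S}^*(Z)$ is continuous for every $r\ge 0$, so that norm convergence in a single step $\mathcal{S}_r^*(Z)$ already forces strong convergence in $\mathcal{S}^*(Z)$.

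For the ``only if'' part, assume $\Phi_n\to\Phi$ strongly. Condition (1) is almost immediate: the strong topology is finer than the weak-$*$ topology, so strong convergence yields $\langle\!\langle\Phi_n,\xi\rangle\!\rangle\to\langle\!\langle\Phi,\xi\rangle\!\rangle$ for every $\xi\in\mathcal{S}(Z)$, and taking $\xi=Z_{\sigma}$ gives $\widehat{\Phi_n}(\sigma)\to\widehat{\Phi}(\sigma)$. For condition (2) I would first note that a strongly convergent sequence is strongly bounded. The crucial structural input is that, by Lemma~\ref{lem-2-4}, $\mathcal{S}(Z)$ is a nuclear Fr\'echet space with Hilbert--Schmidt (hence compact) linking maps, so its strong dual $\mathcal{S}^*(Z)$ is a regular (Silva-type) inductive limit in which every bounded set is contained in, and bounded in, a single step $\mathcal{S}_p^*(Z)$; thus there is $p\ge 0$ with $M:=\sup_n\|\Phi_n\|_{-p}<\infty$. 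Applying Theorem~\ref{thr-2-8} then gives, for each $\sigma$, the estimate $\lambda_{\sigma}^{-2p}|\widehat{\Phi_n}(\sigma)|^2\le\|\Phi_n\|_{-p}^2\le M^2$, which is exactly (\ref{eq-2-19}) with $C=M$.

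For the ``if'' part, assume (1) and (2). By Theorem~\ref{thr-2-7} the uniform bound (\ref{eq-2-19}) places every $\Phi_n$ in a common step $\mathcal{S}_q^*(Z)$ for any fixed $q>p+\tfrac12$, with norms bounded uniformly in $n$; letting $n\to\infty$ in (\ref{eq-2-19}) and using (1) shows $|\widehat{\Phi}(\sigma)|\le C\lambda_{\sigma}^p$ as well, so $\Phi\in\mathcal{S}_q^*(Z)$ too. Fixing such a $q$, Theorem~\ref{thr-2-8} expresses the step norm as $\|\Phi_n-\Phi\|_{-q}^2=\sum_{\sigma\in\Gamma}\lambda_{\sigma}^{-2q}|\widehat{\Phi_n}(\sigma)-\widehat{\Phi}(\sigma)|^2$. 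Here each summand tends to $0$ by (1), while the termwise bound $\lambda_{\sigma}^{-2q}|\widehat{\Phi_n}(\sigma)-\widehat{\Phi}(\sigma)|^2\le 4C^2\lambda_{\sigma}^{-2(q-p)}$ provides a summable dominating function independent of $n$, since $2(q-p)>1$ makes $\sum_{\sigma}\lambda_{\sigma}^{-2(q-p)}$ convergent by Lemma~\ref{lem-2-2}. The dominated convergence theorem for series then yields $\|\Phi_n-\Phi\|_{-q}\to 0$, and the continuity of $\mathcal{S}_q^*(Z)\hookrightarrow\mathcal{S}^*(Z)$ upgrades this to strong convergence $\Phi_n\to\Phi$ in $\mathcal{S}^*(Z)$.

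The routine analytic steps, namely the termwise domination estimate and the dominated convergence for series, are straightforward. The main obstacle is the functional-analytic heart of the ``only if'' direction, namely justifying the single exponent $p$ in (\ref{eq-2-19}): this rests on the regularity of bounded sets in $\mathcal{S}^*(Z)$, i.e.\ on the fact that a strongly bounded sequence collapses into one step $\mathcal{S}_p^*(Z)$. If one prefers to avoid invoking the Silva / $(DFS)$-space structure directly, I would instead derive the same conclusion from the barreledness of the Fr\'echet space $\mathcal{S}(Z)$: every strongly bounded subset of the dual is equicontinuous, hence dominated by a single seminorm $\|\cdot\|_p$, and evaluating the resulting estimate at $\xi=Z_{\sigma}$ with $\|Z_{\sigma}\|_p=\lambda_{\sigma}^p$ again produces (\ref{eq-2-19}).
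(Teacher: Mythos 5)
The paper does not prove Theorem~\ref{thr-2-9}; it is quoted from the reference \cite{wang-chen-2} without argument, so there is no in-paper proof to compare against. Taken on its own terms, your proof is correct and complete. The ``if'' direction is the standard argument: the uniform bound (\ref{eq-2-19}) together with Theorem~\ref{thr-2-7} places all $\Phi_n$ (and, after passing to the limit in the bound, $\Phi$ itself) in a common step $\mathcal{S}_q^*(Z)$ with $q>p+\tfrac12$, and then dominated convergence for the series in Theorem~\ref{thr-2-8}, with dominating term $4C^2\lambda_\sigma^{-2(q-p)}$ summable by Lemma~\ref{lem-2-2}, gives norm convergence in $\mathcal{S}_q^*(Z)$, which is stronger than the claimed strong convergence. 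The ``only if'' direction is where the real content lies, and you identify it correctly: one needs a single exponent $p$ working for all $n$, and this comes from the boundedness of the convergent sequence together with either the regularity of bounded sets in the dual of a nuclear Fr\'echet space or, more economically, Banach--Steinhaus in the barreled space $\mathcal{S}(Z)$, which yields equicontinuity, hence a single seminorm $\|\cdot\|_p$ dominating all the $\Phi_n$; evaluating at $Z_\sigma$ with $\|Z_\sigma\|_p=\lambda_\sigma^p$ then produces (\ref{eq-2-19}). Your fallback via barreledness is the cleaner of the two routes, since it avoids invoking the $(DFS)$ structure. I see no gaps.
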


\section{Clark-Ocone formula for generalized functionals}\label{sec-3}

In this section, we first introduce some fundamental operators on the space
$\mathcal{S}^*(Z)$. And then we establish our Clark-Ocone for functionals in $\mathcal{S}^*(Z)$.

\subsection{Annihilation and creation operators}\label{subsec-3-1}

\begin{theorem}\label{thr-3-1}
 Let $k\in\mathbb{N}$. Then there exists a continuous linear operator $\mathfrak{a}_k\colon \mathcal{S}^*(Z) \rightarrow \mathcal{S}^*(Z)$
 such that
\begin{equation}\label{eq-annihilation}
\widehat{\mathfrak{a}_k\Phi}(\sigma)=[1-\mathbf{1}_{\sigma}(k)]\widehat{\Phi}(\sigma\cup k),\quad
 \sigma\in\Gamma,\,\Phi\in\mathcal{S}^*(Z).
\end{equation}
\end{theorem}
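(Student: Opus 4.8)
The plan is to construct $\mathfrak{a}_k\Phi$ through its Fock transform, relying on the characterization in Theorem~\ref{thr-2-7} for existence and on the regularity result Theorem~\ref{thr-2-8} together with Lemma~\ref{lem-2-5} for continuity. Throughout I would use the elementary identity $\lambda_{\sigma\cup k}=(k+1)\lambda_{\sigma}$, valid whenever $k\notin\sigma$, which is immediate from the definition (\ref{eq-2-2}).

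First I would fix $\Phi\in\mathcal{S}^*(Z)$ and define a function $F_\Phi$ on $\Gamma$ by $F_\Phi(\sigma)=[1-\mathbf{1}_{\sigma}(k)]\widehat{\Phi}(\sigma\cup k)$. Since $\Phi\in\mathcal{S}^*(Z)$, Theorem~\ref{thr-2-7} provides constants $C\ge 0$ and $p\ge 0$ with $|\widehat{\Phi}(\sigma)|\le C\lambda_{\sigma}^p$ for all $\sigma\in\Gamma$. When $k\in\sigma$ one has $F_\Phi(\sigma)=0$, while when $k\notin\sigma$ one estimates $|F_\Phi(\sigma)|=|\widehat{\Phi}(\sigma\cup k)|\le C\lambda_{\sigma\cup k}^p=C(k+1)^p\lambda_{\sigma}^p$; hence $|F_\Phi(\sigma)|\le C(k+1)^p\lambda_{\sigma}^p$ for every $\sigma\in\Gamma$. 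By the ``if'' direction of Theorem~\ref{thr-2-7}, $F_\Phi$ is the Fock transform of a necessarily unique element of $\mathcal{S}^*(Z)$, which I would declare to be $\mathfrak{a}_k\Phi$; uniqueness here is exactly the fact that a generalized functional is determined by its Fock transform. Linearity of $\Phi\mapsto\mathfrak{a}_k\Phi$ then follows at once, since $\Phi\mapsto F_\Phi$ is manifestly linear and the Fock transform is linear and injective.

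It remains to prove continuity, which I expect to be the only delicate point. The strategy is to show that $\mathfrak{a}_k$ maps each $\mathcal{S}_q^*(Z)$ into itself with a controlled norm, and then to pass to the inductive limit. Fix $q\ge 0$ and suppose $\Phi\in\mathcal{S}_q^*(Z)$. By Theorem~\ref{thr-2-8}, $\|\mathfrak{a}_k\Phi\|_{-q}^2=\sum_{\sigma\in\Gamma}\lambda_{\sigma}^{-2q}|\widehat{\mathfrak{a}_k\Phi}(\sigma)|^2$, and since $\widehat{\mathfrak{a}_k\Phi}(\sigma)$ vanishes whenever $k\in\sigma$, the sum runs only over those $\sigma$ with $k\notin\sigma$. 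The substitution $\tau=\sigma\cup k$ is a bijection between $\{\sigma\in\Gamma\mid k\notin\sigma\}$ and $\{\tau\in\Gamma\mid k\in\tau\}$ under which $\lambda_{\sigma}=\lambda_{\tau}/(k+1)$, and carrying out this reindexing yields
\[
\|\mathfrak{a}_k\Phi\|_{-q}^2=(k+1)^{2q}\sum_{\tau\in\Gamma,\,k\in\tau}\lambda_{\tau}^{-2q}|\widehat{\Phi}(\tau)|^2\le (k+1)^{2q}\|\Phi\|_{-q}^2,
\]
where the last step drops the restriction $k\in\tau$ and applies Theorem~\ref{thr-2-8} once more. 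Thus $\mathfrak{a}_k\Phi\in\mathcal{S}_q^*(Z)$ with $\|\mathfrak{a}_k\Phi\|_{-q}\le (k+1)^q\|\Phi\|_{-q}$, so $\mathfrak{a}_k$ restricts to a bounded, hence continuous, operator on each $\mathcal{S}_q^*(Z)$.

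Finally, I would invoke Lemma~\ref{lem-2-5}, according to which the strong topology on $\mathcal{S}^*(Z)=\bigcup_{q\ge 0}\mathcal{S}_q^*(Z)$ coincides with the inductive limit topology of the chain $\{\mathcal{S}_q^*(Z)\}_{q\ge 0}$. Since $\mathfrak{a}_k$ carries $\mathcal{S}_q^*(Z)$ continuously into $\mathcal{S}_q^*(Z)\subset\mathcal{S}^*(Z)$ for every $q$, each restriction $\mathfrak{a}_k|_{\mathcal{S}_q^*(Z)}\colon\mathcal{S}_q^*(Z)\to\mathcal{S}^*(Z)$ is continuous, and the universal property of the inductive limit then delivers continuity of $\mathfrak{a}_k\colon\mathcal{S}^*(Z)\to\mathcal{S}^*(Z)$. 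The main obstacle is precisely this last passage: boundedness on a single level does not by itself give continuity in the strong topology, and one must verify that $\mathfrak{a}_k$ preserves each level $\mathcal{S}_q^*(Z)$ with a finite norm bound so that the inductive-limit criterion of Lemma~\ref{lem-2-5} applies; the reindexing computation above is what secures this level-preservation.
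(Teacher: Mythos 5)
Your proposal is correct and follows essentially the same route as the paper's proof: construct $\mathfrak{a}_k\Phi$ via the Fock-transform characterization of Theorem~\ref{thr-2-7} using the bound $\lambda_{\sigma\cup k}^p=(k+1)^p\lambda_{\sigma}^p$, then establish the level-wise norm estimate $\|\mathfrak{a}_k\Phi\|_{-q}\le(k+1)^q\|\Phi\|_{-q}$ by the reindexing $\tau=\sigma\cup k$ together with Theorem~\ref{thr-2-8}, and finally pass to the strong topology via the inductive-limit description in Lemma~\ref{lem-2-5}. The only cosmetic difference is that the paper phrases the last step through the inclusion mappings $\mathfrak{j}_k\colon\mathcal{S}_p^*(Z)\to\mathcal{S}^*(Z)$, which is exactly your appeal to the universal property of the inductive limit.
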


\begin{proof}
For each $\Phi\in\mathcal{S}^*(Z)$, by Theorem~\ref{thr-2-7}, there exist constants $C$, $p\geq 0$ such that
\begin{equation*}
  |\widehat{\Phi}(\sigma)|\leq C\lambda_{\sigma}^p,\quad  \sigma \in \Gamma,
\end{equation*}
which means that the function $\sigma \mapsto [1-\mathbf{1}_{\sigma}(k)]\widehat{\Phi}(\sigma\cup k)$ satisfies
\begin{equation*}
\begin{split}
  \big|[1-\mathbf{1}_{\sigma}(k)]\widehat{\Phi}(\sigma\cup k)\big|
   &\leq  [1-\mathbf{1}_{\sigma}(k)]C\lambda_{\sigma\cup k}^p\\
  &=[1-\mathbf{1}_{\sigma}(k)]C(1+k)^p\lambda_{\sigma}^p
  \leq C(1+k)^p\lambda_{\sigma}^p,\quad  \sigma\in\Gamma,
\end{split}
\end{equation*}
which, together with Theorem~\ref{thr-2-7}, implies that there exists a unique $\Psi_{\Phi}\in \mathcal{S}^*(Z)$ such that
\begin{equation}\label{eq-3-2}
  \widehat{\Psi_{\Phi}}(\sigma) = [1-\mathbf{1}_{\sigma}(k)]\widehat{\Phi}(\sigma\cup k),\quad \sigma\in \Gamma.
\end{equation}
Now, consider the mapping $\mathfrak{a}_k\colon \mathcal{S}^*(Z) \rightarrow \mathcal{S}^*(Z)$ defined by
\begin{equation}\label{eq-3-3}
  \mathfrak{a}_k\Phi = \Psi_{\Phi},\quad \Phi\in \mathcal{S}^*(Z).
\end{equation}
It is not hard to verify that $\mathfrak{a}_k$ is a linear operator and satisfies (\ref{eq-annihilation}).
To complete the proof, we still need to show that $\mathfrak{a}_k\colon \mathcal{S}^*(Z) \rightarrow \mathcal{S}^*(Z)$ is continuous
with respect to the strong topology over $\mathcal{S}^*(Z)$.

Let $p\geq 0$ and denote by $\mathfrak{j}_k \colon \mathcal{S}_p^*(Z)\rightarrow \mathcal{S}^*(Z)$ the inclusion mapping,
namely $\mathfrak{j}_k$ is the mapping defined by
\begin{equation}\label{eq-3-4}
  \mathfrak{j}_k(\Phi) = \Phi,\quad \Phi\in \mathcal{S}_p^*(Z).
\end{equation}
Then the composition mapping $\mathfrak{a}_k\circ \mathfrak{j}_k$ is a linear operator from $\mathcal{S}_p^*(Z)$ to $\mathcal{S}^*(Z)$.
For each $\Phi \in \mathcal{S}_p^*(Z)$, we have
\begin{equation*}
\begin{split}
  \sum_{\sigma \in \Gamma}\lambda_{\sigma}^{-2p}\big|\widehat{\mathfrak{a}_k\circ \mathfrak{j}_k(\Phi)}(\sigma)\big|^2
  &= \sum_{\sigma \in \Gamma}\lambda_{\sigma}^{-2p}\big|\widehat{\mathfrak{a}_k\Phi}(\sigma)\big|^2\\
  &= \sum_{\sigma \in \Gamma}\lambda_{\sigma}^{-2p}\big|[1-\mathbf{1}_{\sigma}(k)]\widehat{\Phi}(\sigma\cup k)\big|^2\\
  &= \sum_{k\notin\sigma \in \Gamma}(1+k)^{2p}\lambda_{\sigma\cup k}^{-2p}\big|\widehat{\Phi}(\sigma\cup k)\big|^2\\
  &\leq (1+k)^{2p}\sum_{\tau \in \Gamma}\lambda_{\tau}^{-2p}\big|\widehat{\Phi}(\tau)\big|^2,
\end{split}
\end{equation*}
which together with Theorem~\ref{thr-2-8} implies that $\mathfrak{a}_k\circ \mathfrak{j}_k(\Phi)\in \mathcal{S}_p^*(Z)$
and
\begin{equation*}
  \big\|\mathfrak{a}_k\circ \mathfrak{j}_k(\Phi)\big\|_{-p} \leq (1+k)^p  \big\|\Phi\big\|_{-p}.
\end{equation*}
Thus $\mathfrak{a}_k\circ \mathfrak{j}_k\big(\mathcal{S}_p^*(Z)\big)\subset \mathcal{S}_p^*(Z)$
and $\mathfrak{a}_k\circ\mathfrak{j}_k \colon \mathcal{S}_p^*(Z)\rightarrow \mathcal{S}_p^*(Z)$
is a bounded operator,
which implies that $\mathfrak{a}_k\circ \mathfrak{j}_k$ is continuous as an operator
from $\mathcal{S}_p^*(Z)$ to $\mathcal{S}^*(Z)$.

Since the choice of the above $p\geq 0$ is arbitrary,  we actually arrive at a conclusion that
the composition mapping $\mathfrak{a}_k\circ \mathfrak{j}_k\colon \mathcal{S}_p^*(Z)\rightarrow \mathcal{S}^*(Z)$
is continuous for all $p\geq 0$.
Therefore $\mathfrak{a}_k\colon \mathcal{S}^*(Z) \rightarrow \mathcal{S}^*(Z)$
is continuous with respect to the inductive limit topology over $\mathcal{S}^*(Z)$,
which together with Lemma~\ref{lem-2-5} implies that $\mathfrak{a}_k\colon \mathcal{S}^*(Z) \rightarrow \mathcal{S}^*(Z)$
is continuous with respect to the strong topology over $\mathcal{S}^*(Z)$.
\end{proof}

Carefully checking the proof of Theorem~\ref{thr-3-1}, one can find the next result already proven.

\begin{theorem}\label{thr-3-2}
Let $k\in\mathbb{N}$. Then, for each $p\geq 0$, $\mathcal{S}_p^*(Z)$ keeps invariant under the action of $\mathfrak{a}_k$,
and moreover
\begin{equation}\label{eq-3-5}
  \big\|\mathfrak{a}_k\Phi\big\|_{-p} \leq (1+k)^p  \big\|\Phi\big\|_{-p},\quad \Phi \in \mathcal{S}_p^*(Z).
\end{equation}
\end{theorem}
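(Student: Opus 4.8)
The plan is to read off the conclusion directly from the norm estimate already carried out inside the proof of Theorem~\ref{thr-3-1}, combined with the regularity criterion of Theorem~\ref{thr-2-8}. Since Theorem~\ref{thr-2-8} asserts that membership in $\mathcal{S}_p^*(Z)$ and the exact value of the norm $\|\cdot\|_{-p}$ are both governed entirely by the weighted $\ell^2$-sum $\sum_{\sigma\in\Gamma}\lambda_{\sigma}^{-2p}|\widehat{\Phi}(\sigma)|^2$ of the Fock transform, the whole statement reduces to a single computation using the explicit formula (\ref{eq-annihilation}) for $\widehat{\mathfrak{a}_k\Phi}$.

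First I would fix $p\geq 0$ and an arbitrary $\Phi\in\mathcal{S}_p^*(Z)$, and recall from Theorem~\ref{thr-3-1} that $\mathfrak{a}_k\Phi$ is already a well-defined element of $\mathcal{S}^*(Z)$ whose Fock transform is $\widehat{\mathfrak{a}_k\Phi}(\sigma)=[1-\mathbf{1}_{\sigma}(k)]\widehat{\Phi}(\sigma\cup k)$. Then I would evaluate the sum $\sum_{\sigma\in\Gamma}\lambda_{\sigma}^{-2p}|\widehat{\mathfrak{a}_k\Phi}(\sigma)|^2$. The factor $1-\mathbf{1}_{\sigma}(k)$ restricts the summation to those $\sigma$ with $k\notin\sigma$, and for such $\sigma$ the multiplicativity of $\lambda$ recorded in (\ref{eq-2-2}) gives $\lambda_{\sigma\cup k}=(1+k)\lambda_{\sigma}$, hence $\lambda_{\sigma}^{-2p}=(1+k)^{2p}\lambda_{\sigma\cup k}^{-2p}$.

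Next I would perform the reindexing $\tau=\sigma\cup k$: as $\sigma$ ranges over all finite subsets of $\mathbb{N}$ not containing $k$, the set $\tau$ ranges over all finite subsets that do contain $k$, which is a subfamily of $\Gamma$. This turns the sum into $(1+k)^{2p}\sum_{k\in\tau\in\Gamma}\lambda_{\tau}^{-2p}|\widehat{\Phi}(\tau)|^2$, which is bounded above by $(1+k)^{2p}\sum_{\tau\in\Gamma}\lambda_{\tau}^{-2p}|\widehat{\Phi}(\tau)|^2=(1+k)^{2p}\|\Phi\|_{-p}^2$, the last equality being Theorem~\ref{thr-2-8}. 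In particular the left-hand sum is finite, so that same theorem yields both $\mathfrak{a}_k\Phi\in\mathcal{S}_p^*(Z)$ and the estimate (\ref{eq-3-5}), establishing the claimed invariance and norm bound simultaneously.

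I do not expect a genuine obstacle here: the chain of inequalities needed is precisely the one already displayed in the proof of Theorem~\ref{thr-3-1}, so the present statement is essentially a corollary of that argument read off at the level of a single fixed $p$. The only point demanding a little care is the index bookkeeping, namely correctly using the indicator $1-\mathbf{1}_{\sigma}(k)$ and the change of variable $\tau=\sigma\cup k$ so that no term is double-counted and the passage from the sum over $\{\tau\in\Gamma\mid k\in\tau\}$ to the full sum over $\Gamma$ is a legitimate upper bound rather than an equality.
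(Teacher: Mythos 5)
Your proposal is correct and is essentially the paper's own proof: the authors derive Theorem~\ref{thr-3-2} by observing that the displayed chain of estimates inside the proof of Theorem~\ref{thr-3-1} (the reindexing $\tau=\sigma\cup k$, the identity $\lambda_{\sigma\cup k}=(1+k)\lambda_{\sigma}$, and the appeal to Theorem~\ref{thr-2-8}) already establishes the invariance of $\mathcal{S}_p^*(Z)$ and the bound (\ref{eq-3-5}). No differences of substance.
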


With the same arguments, we can prove the next two theorems,
which are dual forms of Theorem~\ref{thr-3-1} and Theorem~\ref{thr-3-2}, respectively.

\begin{theorem}\label{thr-3-3}
 Let $k\in\mathbb{N}$. Then there exists a continuous linear operator $\mathfrak{a}_k^{\dag}\colon \mathcal{S}^*(Z) \rightarrow \mathcal{S}^*(Z)$
 such that
\begin{equation}\label{eq-creation}
\widehat{\mathfrak{a}_k^{\dag}\Phi}(\sigma)=\mathbf{1}_{\sigma}(k)\widehat{\Phi}(\sigma\setminus k),\quad
 \sigma\in\Gamma,\,\Phi\in\mathcal{S}^*(Z).
\end{equation}
\end{theorem}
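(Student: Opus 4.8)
The plan is to mirror the proof of Theorem~\ref{thr-3-1}, using the characterization of Theorem~\ref{thr-2-7} to construct the operator and the norm estimate of Theorem~\ref{thr-2-8} together with Lemma~\ref{lem-2-5} to obtain continuity. First I would fix $\Phi \in \mathcal{S}^*(Z)$ and invoke Theorem~\ref{thr-2-7} to get constants $C$, $p\geq 0$ with $|\widehat{\Phi}(\sigma)| \leq C\lambda_{\sigma}^p$ for all $\sigma \in \Gamma$. I would then verify that the candidate function $\sigma \mapsto \mathbf{1}_{\sigma}(k)\widehat{\Phi}(\sigma\setminus k)$ meets the growth condition of Theorem~\ref{thr-2-7}: when $k \in \sigma$ one has $\lambda_{\sigma\setminus k} = (k+1)^{-1}\lambda_{\sigma} \leq \lambda_{\sigma}$, so $|\mathbf{1}_{\sigma}(k)\widehat{\Phi}(\sigma\setminus k)| \leq C\lambda_{\sigma\setminus k}^p \leq C\lambda_{\sigma}^p$. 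By Theorem~\ref{thr-2-7} there is then a unique $\Psi_{\Phi} \in \mathcal{S}^*(Z)$ whose Fock transform is this function, and I would define $\mathfrak{a}_k^{\dag}\Phi = \Psi_{\Phi}$. Linearity follows at once from the linearity of the Fock transform and of the map $F \mapsto \mathbf{1}_{(\cdot)}(k)F(\cdot\setminus k)$.

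For continuity I would argue exactly as in Theorem~\ref{thr-3-1}. Fixing $p\geq 0$ and writing $\mathfrak{j}_k$ for the inclusion $\mathcal{S}_p^*(Z)\rightarrow \mathcal{S}^*(Z)$, I would compute
\begin{equation*}
\sum_{\sigma\in\Gamma}\lambda_{\sigma}^{-2p}\big|\widehat{\mathfrak{a}_k^{\dag}\Phi}(\sigma)\big|^2 = \sum_{k\in\sigma\in\Gamma}\lambda_{\sigma}^{-2p}\big|\widehat{\Phi}(\sigma\setminus k)\big|^2.
\end{equation*}
The crucial step is the reindexing $\tau = \sigma\setminus k$, so that $\sigma = \tau\cup k$ with $k\notin\tau$ and $\lambda_{\sigma} = (k+1)\lambda_{\tau}$; this turns the sum into $\sum_{k\notin\tau\in\Gamma}(k+1)^{-2p}\lambda_{\tau}^{-2p}|\widehat{\Phi}(\tau)|^2$, which is bounded above by $\sum_{\tau\in\Gamma}\lambda_{\tau}^{-2p}|\widehat{\Phi}(\tau)|^2 = \|\Phi\|_{-p}^2$ since $(k+1)^{-2p} \leq 1$. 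By Theorem~\ref{thr-2-8} this shows $\mathfrak{a}_k^{\dag}\circ\mathfrak{j}_k(\Phi) \in \mathcal{S}_p^*(Z)$ with $\|\mathfrak{a}_k^{\dag}\Phi\|_{-p} \leq (1+k)^{-p}\|\Phi\|_{-p} \leq \|\Phi\|_{-p}$, so that $\mathfrak{a}_k^{\dag}\circ\mathfrak{j}_k$ is bounded, hence continuous from $\mathcal{S}_p^*(Z)$ to $\mathcal{S}^*(Z)$. Since $p\geq 0$ is arbitrary, $\mathfrak{a}_k^{\dag}$ is continuous for the inductive limit topology on $\mathcal{S}^*(Z)$, and by Lemma~\ref{lem-2-5} this coincides with the strong topology, giving the claim.

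As for the main obstacle: there is essentially no deep difficulty, since the argument is genuinely dual to that of Theorem~\ref{thr-3-1}. The one place that demands care is the reindexing in the continuity estimate, where the roles of ``adding'' and ``removing'' $k$ are interchanged relative to the annihilation case. Here the indicator $\mathbf{1}_{\sigma}(k)$ forces $k\in\sigma$, and the factor arising from $\lambda_{\tau\cup k} = (k+1)\lambda_{\tau}$ enters as $(k+1)^{-2p}$ rather than $(k+1)^{2p}$, so the resulting operator norm bound is $(1+k)^{-p}\leq 1$ instead of $(1+k)^p$. Getting the direction of this substitution and the sign of the exponent right is the only point on which one must not slip.
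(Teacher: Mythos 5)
Your proposal is correct and follows essentially the same route as the paper's own proof: construct $\mathfrak{a}_k^{\dag}\Phi$ via the growth characterization of Theorem~\ref{thr-2-7}, then establish the bound $\|\mathfrak{a}_k^{\dag}\Phi\|_{-p}\leq (1+k)^{-p}\|\Phi\|_{-p}$ by the reindexing $\tau=\sigma\setminus k$ and conclude continuity from the inductive limit description in Lemma~\ref{lem-2-5}. The point you flag as delicate --- that the indicator forces $k\in\sigma$ and the factor enters as $(1+k)^{-2p}$ rather than $(1+k)^{2p}$ --- is exactly the computation the paper performs.
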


\begin{proof}
For each $\Phi\in\mathcal{S}^*(Z)$, by Theorem~\ref{thr-2-7}, there exist constants $C$, $p\geq 0$ such that
\begin{equation*}
  |\widehat{\Phi}(\sigma)|\leq C\lambda_{\sigma}^p,\quad  \sigma \in \Gamma,
\end{equation*}
which means that the function $\sigma \mapsto \mathbf{1}_{\sigma}(k)\widehat{\Phi}(\sigma\setminus k)$ satisfies
\begin{equation*}
\begin{split}
  \big|\mathbf{1}_{\sigma}(k)\widehat{\Phi}(\sigma\setminus k)\big|
   &\leq  \mathbf{1}_{\sigma}(k)C\lambda_{\sigma\setminus k}^p\\
  &=\mathbf{1}_{\sigma}(k)C(1+k)^{-p}\lambda_{\sigma}^p
  \leq C(1+k)^{-p}\lambda_{\sigma}^p,\quad  \sigma\in\Gamma,
\end{split}
\end{equation*}
which, together with Theorem~\ref{thr-2-7}, implies that there exists a unique $\Theta_{\Phi}\in \mathcal{S}^*(Z)$ such that
\begin{equation}\label{eq-3-7}
  \widehat{\Theta_{\Phi}}(\sigma) = \mathbf{1}_{\sigma}(k)\widehat{\Phi}(\sigma\setminus k),\quad \sigma\in \Gamma.
\end{equation}
Now, consider the mapping $\mathfrak{a}_k^{\dag}\colon \mathcal{S}^*(Z) \rightarrow \mathcal{S}^*(Z)$ defined by
\begin{equation}\label{eq-3-8}
  \mathfrak{a}_k^{\dag}\Phi = \Theta_{\Phi},\quad \Phi\in \mathcal{S}^*(Z).
\end{equation}
It is not hard to verify that $\mathfrak{a}_k^{\dag}$ is a linear operator and satisfies (\ref{eq-creation}).
To complete the proof, we still need to show that $\mathfrak{a}_k^{\dag}\colon \mathcal{S}^*(Z) \rightarrow \mathcal{S}^*(Z)$ is continuous
with respect to the strong topology over $\mathcal{S}^*(Z)$.

Let $p\geq 0$ and denote by $\mathfrak{j}_k \colon \mathcal{S}_p^*(Z)\rightarrow \mathcal{S}^*(Z)$ the inclusion mapping.
Then the composition mapping $\mathfrak{a}_k^{\dag}\circ \mathfrak{j}_k$ is a linear operator from $\mathcal{S}_p^*(Z)$ to $\mathcal{S}^*(Z)$.
For each $\Phi \in \mathcal{S}_p^*(Z)$, we have
\begin{equation*}
\begin{split}
  \sum_{\sigma \in \Gamma}\lambda_{\sigma}^{-2p}\big|\widehat{\mathfrak{a}_k^{\dag}\circ \mathfrak{j}_k(\Phi)}(\sigma)\big|^2
  &= \sum_{\sigma \in \Gamma}\lambda_{\sigma}^{-2p}\big|\widehat{\mathfrak{a}_k^{\dag}\Phi}(\sigma)\big|^2\\
  &= \sum_{\sigma \in \Gamma}\lambda_{\sigma}^{-2p}\big|\mathbf{1}_{\sigma}(k)\widehat{\Phi}(\sigma\setminus k)\big|^2\\
  &= \sum_{k\in\sigma \in \Gamma}(1+k)^{-2p}\lambda_{\sigma\setminus k}^{-2p}\big|\widehat{\Phi}(\sigma\setminus k)\big|^2\\
  &\leq (1+k)^{-2p}\sum_{\tau \in \Gamma}\lambda_{\tau}^{-2p}\big|\widehat{\Phi}(\tau)\big|^2,
\end{split}
\end{equation*}
which together with Theorem~\ref{thr-2-8} implies that $\mathfrak{a}_k^{\dag}\circ \mathfrak{j}_k(\Phi)\in \mathcal{S}_p^*(Z)$
and
\begin{equation*}
  \big\|\mathfrak{a}_k^{\dag}\circ \mathfrak{j}_k(\Phi)\big\|_{-p} \leq (1+k)^{-p}  \big\|\Phi\big\|_{-p}.
\end{equation*}
Thus $\mathfrak{a}_k^{\dag}\circ \mathfrak{j}_k\big(\mathcal{S}_p^*(Z)\big)\subset \mathcal{S}_p^*(Z)$
and $\mathfrak{a}_k^{\dag}\circ\mathfrak{j}_k \colon \mathcal{S}_p^*(Z)\rightarrow \mathcal{S}_p^*(Z)$
is a bounded operator,
which implies that $\mathfrak{a}_k^{\dag}\circ \mathfrak{j}_k$ is continuous as an operator
from $\mathcal{S}_p^*(Z)$ to $\mathcal{S}^*(Z)$.

Since the choice of the above $p\geq 0$ is arbitrary,  we actually arrive at a conclusion that
the composition mapping $\mathfrak{a}_k^{\dag}\circ \mathfrak{j}_k\colon \mathcal{S}_p^*(Z)\rightarrow \mathcal{S}^*(Z)$
is continuous for all $p\geq 0$.
Therefore $\mathfrak{a}_k^{\dag}\colon \mathcal{S}^*(Z) \rightarrow \mathcal{S}^*(Z)$
is continuous with respect to the inductive limit topology over $\mathcal{S}^*(Z)$,
which together with Lemma~\ref{lem-2-5} implies that $\mathfrak{a}_k^{\dag}\colon \mathcal{S}^*(Z) \rightarrow \mathcal{S}^*(Z)$
is continuous with respect to the strong topology over $\mathcal{S}^*(Z)$.
\end{proof}

From the proof of Theorem~\ref{thr-3-3}, we can easily get the next result concerning the operator $\mathfrak{a}_k^{\dag}$.

\begin{theorem}\label{thr-3-4}
Let $k\in\mathbb{N}$. Then, for each $p\geq 0$, $\mathcal{S}_p^*(Z)$ keeps invariant under the action of $\mathfrak{a}_k^{\dag}$,
and moreover
\begin{equation}\label{eq-3-9}
  \big\|\mathfrak{a}_k^{\dag}\Phi\big\|_{-p} \leq (1+k)^{-p}  \big\|\Phi\big\|_{-p},\quad \Phi \in \mathcal{S}_p^*(Z).
\end{equation}
\end{theorem}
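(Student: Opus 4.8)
The plan is to extract the desired invariance and norm bound directly from the estimate already carried out in the proof of Theorem~\ref{thr-3-3}. Fix $k\in\mathbb{N}$ and $p\geq 0$, and take an arbitrary $\Phi\in\mathcal{S}_p^*(Z)$. Since $\Phi\in\mathcal{S}_p^*(Z)$, Theorem~\ref{thr-2-8} guarantees that $\sum_{\sigma\in\Gamma}\lambda_\sigma^{-2p}|\widehat{\Phi}(\sigma)|^2=\|\Phi\|_{-p}^2<\infty$. The goal is to run the same chain of estimates used for $\mathfrak{a}_k^{\dag}\circ\mathfrak{j}_k$ and then reinterpret its two ends through Theorem~\ref{thr-2-8}.

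First I would substitute the defining relation (\ref{eq-creation}) into the weighted sum $\sum_{\sigma\in\Gamma}\lambda_\sigma^{-2p}|\widehat{\mathfrak{a}_k^{\dag}\Phi}(\sigma)|^2$. Because the indicator $\mathbf{1}_\sigma(k)$ forces $k\in\sigma$, the key arithmetic step is the factorization $\lambda_\sigma=(1+k)\lambda_{\sigma\setminus k}$, valid whenever $k\in\sigma$, which yields $\lambda_\sigma^{-2p}=(1+k)^{-2p}\lambda_{\sigma\setminus k}^{-2p}$. Pulling the constant $(1+k)^{-2p}$ out and reindexing by $\tau=\sigma\setminus k$---which ranges over precisely those $\tau\in\Gamma$ with $k\notin\tau$---turns the sum into $(1+k)^{-2p}\sum_{k\notin\tau}\lambda_\tau^{-2p}|\widehat{\Phi}(\tau)|^2$; enlarging to the full index set $\Gamma$ then gives the bound $(1+k)^{-2p}\sum_{\tau\in\Gamma}\lambda_\tau^{-2p}|\widehat{\Phi}(\tau)|^2=(1+k)^{-2p}\|\Phi\|_{-p}^2$.

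Having established that $\sum_{\sigma\in\Gamma}\lambda_\sigma^{-2p}|\widehat{\mathfrak{a}_k^{\dag}\Phi}(\sigma)|^2\leq(1+k)^{-2p}\|\Phi\|_{-p}^2<\infty$, I would then apply Theorem~\ref{thr-2-8} twice. The finiteness of this series shows that $\mathfrak{a}_k^{\dag}\Phi\in\mathcal{S}_p^*(Z)$, so $\mathcal{S}_p^*(Z)$ is invariant under $\mathfrak{a}_k^{\dag}$; and the norm identity (\ref{eq-2-17}) identifies the left-hand series with $\|\mathfrak{a}_k^{\dag}\Phi\|_{-p}^2$, delivering (\ref{eq-3-9}) after taking square roots.

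Since the estimate is already worked out inside the proof of Theorem~\ref{thr-3-3}, there is no genuine obstacle here; the only point requiring a little care is the reindexing $\sigma\mapsto\sigma\setminus k$, where one must observe that this map is a bijection from $\{\sigma\in\Gamma:k\in\sigma\}$ onto $\{\tau\in\Gamma:k\notin\tau\}$, so that the passage to the full sum over $\Gamma$ is legitimate and produces exactly the stated inequality rather than an equality.
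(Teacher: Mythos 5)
Your proposal is correct and follows exactly the route the paper intends: Theorem~\ref{thr-3-4} is read off from the chain of estimates already carried out in the proof of Theorem~\ref{thr-3-3}, using the factorization $\lambda_{\sigma}=(1+k)\lambda_{\sigma\setminus k}$ for $k\in\sigma$ and then Theorem~\ref{thr-2-8} to convert the weighted sum bound into membership in $\mathcal{S}_p^*(Z)$ and the norm inequality (\ref{eq-3-9}). Your extra remark that $\sigma\mapsto\sigma\setminus k$ is a bijection onto $\{\tau\in\Gamma: k\notin\tau\}$ is a correct and welcome clarification of a step the paper leaves implicit.
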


\begin{remark}\label{rem-3-1}
For $k\geq 0$, the corresponding annihilation operator $\partial_k$ on $\mathcal{L}^2(Z)$ and its dual $\partial_k^*$ (known as the creation operator)
admit the property
\begin{equation*}
  \partial_k Z_{\sigma} = \mathbf{1}_{\sigma}(k)Z_{\sigma\setminus k},\quad
  \partial_k^* Z_{\sigma} = [1-\mathbf{1}_{\sigma}(k)]Z_{\sigma\cup k}, \quad \sigma \in \Gamma.
\end{equation*}
And moreover, they satisfy the canonical anti-commutation relation (CAR) in equal-time
\begin{equation*}
\partial_k^*\partial_k + \partial_k\partial_k^*=I,
\end{equation*}
where $I$ means the identity operator on $\mathcal{L}^2(Z)$. We refer to \cite{wang-lc, wang-cl} for details about these operators.
\end{remark}

The next theorem shows the link between $\mathfrak{a}_k$ and $\partial_k$, as well as between $\mathfrak{a}_k^{\dag}$
and $\partial_k^*$.

\begin{theorem}\label{thr-3-5}
Let $k\geq 0$. Then the operators $\mathfrak{a}_k$ and $\mathfrak{a}_k^{\dag}$ satisfy
\begin{equation}\label{eq-3-10}
  \mathfrak{a}_k \textsf{R} = \textsf{R}\partial_k,\quad \mathfrak{a}_k^{\dag}\textsf{R} = \textsf{R}\partial_k^*.
\end{equation}
where $\textit{\textsf{R}}$ is the Riesz mapping as indicated in Remark~\ref{rem-2-1}.
\end{theorem}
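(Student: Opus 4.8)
The plan is to exploit the fact, noted just after Definition~\ref{def-2-3}, that a generalized functional of $Z$ is completely determined by its Fock transform: to prove an identity between two elements of $\mathcal{S}^*(Z)$ it suffices to check that their Fock transforms agree at every $\sigma\in\Gamma$. Since $\mathfrak{a}_k\textsf{R}$, $\textsf{R}\partial_k$, $\mathfrak{a}_k^{\dag}\textsf{R}$ and $\textsf{R}\partial_k^*$ are all defined on $\mathcal{L}^2(Z)$ (recall that $\partial_k$ and $\partial_k^*$ are bounded on $\mathcal{L}^2(Z)$, so $\partial_k\eta$, $\partial_k^*\eta$ again lie in $\mathcal{L}^2(Z)$ and $\textsf{R}$ may be applied), I would fix an arbitrary $\eta\in\mathcal{L}^2(Z)$ and compare the Fock transforms of the relevant images of $\eta$.

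First I would record the Fock transform of $\textsf{R}\eta$. Combining the definition $\widehat{\Phi}(\sigma)=\langle\!\langle\Phi,Z_{\sigma}\rangle\!\rangle$ with the defining property $\langle\!\langle\textsf{R}\eta,\xi\rangle\!\rangle=\langle\eta,\xi\rangle$ of the Riesz mapping (Remark~\ref{rem-2-1}) gives $\widehat{\textsf{R}\eta}(\sigma)=\langle\eta,Z_{\sigma}\rangle$ for every $\sigma\in\Gamma$. For the first identity I would then apply the defining relation (\ref{eq-annihilation}) of $\mathfrak{a}_k$ to obtain $\widehat{\mathfrak{a}_k\textsf{R}\eta}(\sigma)=[1-\mathbf{1}_{\sigma}(k)]\langle\eta,Z_{\sigma\cup k}\rangle$. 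On the other side, $\widehat{\textsf{R}\partial_k\eta}(\sigma)=\langle\partial_k\eta,Z_{\sigma}\rangle$, and moving $\partial_k$ onto its adjoint $\partial_k^*$ and invoking the action of $\partial_k^*$ on the canonical basis from Remark~\ref{rem-3-1}, namely $\partial_k^* Z_{\sigma}=[1-\mathbf{1}_{\sigma}(k)]Z_{\sigma\cup k}$, yields $\langle\partial_k\eta,Z_{\sigma}\rangle=\langle\eta,\partial_k^*Z_{\sigma}\rangle=[1-\mathbf{1}_{\sigma}(k)]\langle\eta,Z_{\sigma\cup k}\rangle$. The two Fock transforms coincide for all $\sigma$, so $\mathfrak{a}_k\textsf{R}\eta=\textsf{R}\partial_k\eta$; since $\eta$ is arbitrary, $\mathfrak{a}_k\textsf{R}=\textsf{R}\partial_k$.

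For the second identity I would run the same computation with $\mathfrak{a}_k^{\dag}$ in place of $\mathfrak{a}_k$: relation (\ref{eq-creation}) gives $\widehat{\mathfrak{a}_k^{\dag}\textsf{R}\eta}(\sigma)=\mathbf{1}_{\sigma}(k)\langle\eta,Z_{\sigma\setminus k}\rangle$, while $\widehat{\textsf{R}\partial_k^*\eta}(\sigma)=\langle\partial_k^*\eta,Z_{\sigma}\rangle=\langle\eta,\partial_k Z_{\sigma}\rangle=\mathbf{1}_{\sigma}(k)\langle\eta,Z_{\sigma\setminus k}\rangle$, using $(\partial_k^*)^*=\partial_k$ together with $\partial_k Z_{\sigma}=\mathbf{1}_{\sigma}(k)Z_{\sigma\setminus k}$ from Remark~\ref{rem-3-1}. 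Equality of the Fock transforms then gives $\mathfrak{a}_k^{\dag}\textsf{R}=\textsf{R}\partial_k^*$.

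I expect no serious obstacle; the only points requiring care are bookkeeping ones. One must use the adjoint relation $\langle\partial_k\eta,Z_{\sigma}\rangle=\langle\eta,\partial_k^*Z_{\sigma}\rangle$ legitimately, which is fine because $\partial_k$ and $\partial_k^*$ are mutually adjoint bounded operators on $\mathcal{L}^2(Z)$, as implicit in Remark~\ref{rem-3-1}. One must also respect the convention that $\langle\cdot,\cdot\rangle$ is conjugate-linear in its first argument; this causes no trouble because the factors $\mathbf{1}_{\sigma}(k)$ and $1-\mathbf{1}_{\sigma}(k)$ are real and may be pulled through the pairing without conjugation. Everything else is a direct transcription of the defining Fock-transform relations.
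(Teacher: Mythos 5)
Your proposal is correct and coincides with the paper's own argument: the paper likewise fixes $\eta\in\mathcal{L}^2(Z)$, computes $\widehat{\mathfrak{a}_k\textsf{R}\eta}(\sigma)=[1-\mathbf{1}_{\sigma}(k)]\langle\eta,Z_{\sigma\cup k}\rangle=\langle\eta,\partial_k^*Z_{\sigma}\rangle=\langle\partial_k\eta,Z_{\sigma}\rangle=\widehat{\textsf{R}\partial_k\eta}(\sigma)$, and concludes by injectivity of the Fock transform, treating the second identity ``similarly.'' Your only addition is writing out that second computation explicitly, which is harmless.
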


\begin{proof}
Let $\eta \in \mathcal{L}^2(Z)$. Then, for all $\sigma \in \Gamma$, we have
\begin{equation*}
  \widehat{\mathfrak{a}_k \textit{\textsf{R}}\eta}(\sigma)
  = [1-\mathbf{1}_{\sigma}(k)]\langle \eta, Z_{\sigma\cup k}\rangle
  = \langle \eta, \partial_k^* Z_{\sigma}\rangle
  = \langle \partial_k\eta,  Z_{\sigma}\rangle
  = \widehat{\textit{\textsf{R}}\partial_k \eta}(\sigma),
\end{equation*}
which implies $\mathfrak{a}_k \textit{\textsf{R}}\eta= \textit{\textsf{R}}\partial_k \eta$. It then follows by
the arbitrariness of $\eta \in \mathcal{L}^2(Z)$ that $\mathfrak{a}_k \textit{\textsf{R}}= \textit{\textsf{R}}\partial_k$.
Similarly, we can prove $\mathfrak{a}_k^{\dag}\textit{\textsf{R}} = \textit{\textsf{R}}\partial_k^*$.
\end{proof}

In view of Theorem~\ref{thr-3-5}, we give the following definition to name the operators $\mathfrak{a}_k$ and $\mathfrak{a}_k^{\dag}$.

\begin{definition}\label{def-3-1}
For $k\geq 0$, the operators $\mathfrak{a}_k$ and $\mathfrak{a}_k^{\dag}$ are called the
annihilation and creation operators on generalized functionals of $Z$, respectively.
\end{definition}

Much like the operators $\{\partial_k,\,\partial_k^*\}$ on $\mathcal{L}^2(Z)$,
the operators $\{\mathfrak{a}_k,\, \mathfrak{a}_k^{\dag}\}$ also satisfy a canonical anti-commutation relation (CAR) in equal-time.

\begin{theorem}\label{thr-3-6}
Let $I$ be the identity operator on $\mathcal{S}^*(Z)$. Then, for $k\geq 0$, it holds that
\begin{equation}\label{eq-3-11}
\mathfrak{a}_k^{\dag}\mathfrak{a}_k + \mathfrak{a}_k\mathfrak{a}_k^{\dag} =I.
\end{equation}
\end{theorem}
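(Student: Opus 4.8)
The plan is to exploit the fact, recorded just after Definition~\ref{def-2-3}, that a generalized functional is completely determined by its Fock transform: for $\Phi,\Psi\in\mathcal{S}^*(Z)$ one has $\Phi=\Psi$ precisely when $\widehat{\Phi}=\widehat{\Psi}$. Since both $\mathfrak{a}_k^{\dag}\mathfrak{a}_k$ and $\mathfrak{a}_k\mathfrak{a}_k^{\dag}$ are continuous linear operators on $\mathcal{S}^*(Z)$ by Theorem~\ref{thr-3-1} and Theorem~\ref{thr-3-3}, it suffices to verify that, for an arbitrary $\Phi\in\mathcal{S}^*(Z)$ and every $\sigma\in\Gamma$,
\begin{equation*}
  \widehat{\mathfrak{a}_k^{\dag}\mathfrak{a}_k\Phi}(\sigma)+\widehat{\mathfrak{a}_k\mathfrak{a}_k^{\dag}\Phi}(\sigma)=\widehat{\Phi}(\sigma).
\end{equation*}
Thus the entire argument reduces to a pointwise computation on $\Gamma$ using the defining identities (\ref{eq-annihilation}) and (\ref{eq-creation}).

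First I would compute $\widehat{\mathfrak{a}_k^{\dag}\mathfrak{a}_k\Phi}(\sigma)$ by applying (\ref{eq-creation}) and then (\ref{eq-annihilation}). Writing $\Psi=\mathfrak{a}_k\Phi$, one gets $\widehat{\mathfrak{a}_k^{\dag}\Psi}(\sigma)=\mathbf{1}_{\sigma}(k)\widehat{\Psi}(\sigma\setminus k)=\mathbf{1}_{\sigma}(k)[1-\mathbf{1}_{\sigma\setminus k}(k)]\widehat{\Phi}\big((\sigma\setminus k)\cup k\big)$. The key set-theoretic simplifications are that $k\notin\sigma\setminus k$, so $\mathbf{1}_{\sigma\setminus k}(k)=0$, and that $(\sigma\setminus k)\cup k=\sigma$ whenever $k\in\sigma$; these collapse the expression to $\mathbf{1}_{\sigma}(k)\widehat{\Phi}(\sigma)$. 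Symmetrically, writing $\Theta=\mathfrak{a}_k^{\dag}\Phi$ and applying (\ref{eq-annihilation}) then (\ref{eq-creation}) yields $\widehat{\mathfrak{a}_k\Theta}(\sigma)=[1-\mathbf{1}_{\sigma}(k)]\mathbf{1}_{\sigma\cup k}(k)\widehat{\Phi}\big((\sigma\cup k)\setminus k\big)$, and here the relevant identities are $k\in\sigma\cup k$, so $\mathbf{1}_{\sigma\cup k}(k)=1$, together with $(\sigma\cup k)\setminus k=\sigma$ whenever $k\notin\sigma$; this reduces the term to $[1-\mathbf{1}_{\sigma}(k)]\widehat{\Phi}(\sigma)$.

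Adding the two contributions gives
\begin{equation*}
  \mathbf{1}_{\sigma}(k)\widehat{\Phi}(\sigma)+[1-\mathbf{1}_{\sigma}(k)]\widehat{\Phi}(\sigma)=\widehat{\Phi}(\sigma)=\widehat{I\Phi}(\sigma),
\end{equation*}
which holds for every $\sigma\in\Gamma$, and the injectivity of the Fock transform then delivers (\ref{eq-3-11}). There is no genuine analytic obstacle here, since continuity of all the operators involved is already guaranteed by the earlier theorems; the only point requiring care is the bookkeeping of the indicator factors $\mathbf{1}_{\sigma}(k)$ and the nested set operations $\sigma\cup k$, $\sigma\setminus k$ under composition. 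The decisive observation, which makes each composed term collapse to a single clean factor, is that the ``wrong'' indicator always vanishes or equals one automatically: $\mathfrak{a}_k$ followed by $\mathfrak{a}_k^{\dag}$ acts nontrivially exactly on the $\sigma$ with $k\in\sigma$, while $\mathfrak{a}_k^{\dag}$ followed by $\mathfrak{a}_k$ acts nontrivially exactly on the complementary set with $k\notin\sigma$, and these two cases partition $\Gamma$.
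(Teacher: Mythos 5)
Your proposal is correct and follows essentially the same route as the paper: compute the Fock transforms of $\mathfrak{a}_k^{\dag}\mathfrak{a}_k\Phi$ and $\mathfrak{a}_k\mathfrak{a}_k^{\dag}\Phi$ via (\ref{eq-annihilation}) and (\ref{eq-creation}), observe they sum to $\widehat{\Phi}(\sigma)$, and conclude by the injectivity of the Fock transform. Your version merely spells out the intermediate indicator and set-theoretic simplifications that the paper's proof leaves implicit.
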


\begin{proof}
Let $\Phi \in \mathcal{S}^*(Z)$. Then, for any $\sigma \in \Gamma$, it follows from (\ref{eq-annihilation}) and (\ref{eq-creation}) that
\begin{equation}\label{eq-3-12}
  \widehat{\mathfrak{a}_k^{\dag}\mathfrak{a}_k\Phi}(\sigma)
  = \mathbf{1}_{\sigma}(k)\widehat{\mathfrak{a}_k\Phi}(\sigma\setminus k)
  = \mathbf{1}_{\sigma}(k) \widehat{\Phi}(\sigma)
\end{equation}
and
\begin{equation}\label{eq-3-13}
  \widehat{\mathfrak{a}_k\mathfrak{a}_k^{\dag}\Phi}(\sigma)
  = (1-\mathbf{1}_{\sigma}(k))\widehat{\mathfrak{a}_k^{\dag}\Phi}(\sigma \cup k)
  = (1-\mathbf{1}_{\sigma}(k))\widehat{\Phi}(\sigma),
\end{equation}
thus
\begin{equation*}
  \widehat{(\mathfrak{a}_k^{\dag}\mathfrak{a}_k + \mathfrak{a}_k\mathfrak{a}_k^{\dag})\Phi}(\sigma)
  = \widehat{\mathfrak{a}_k\mathfrak{a}_k^{\dag}\Phi}(\sigma) + \widehat{\mathfrak{a}_k\mathfrak{a}_k^{\dag}\Phi}(\sigma)
  = \widehat{\Phi}(\sigma),
\end{equation*}
which implies that $(\mathfrak{a}_k^{\dag}\mathfrak{a}_k + \mathfrak{a}_k\mathfrak{a}_k^{\dag})\Phi=\Phi$.
It then follows from the arbitrariness of $\Phi \in \mathcal{S}^*(Z)$
that $\mathfrak{a}_k^{\dag}\mathfrak{a}_k + \mathfrak{a}_k\mathfrak{a}_k^{\dag} =I$.
\end{proof}

\subsection{Expectation and conditional expectation operators}\label{subsec-3-2}

For the Riesz mapping $\textit{\textsf{R}}$, by using Theorem~\ref{thr-2-8}, we can prove that
$\textit{\textsf{R}}\eta \in \mathcal{S}_0^*(Z)$ for all $\eta \in \mathcal{L}^2(Z)$.
Especially, we have $\textit{\textsf{R}}1 \in \mathcal{S}_0^*(Z)$.

\begin{theorem}\label{thr-3-7}
The mapping $\mathfrak{E}\colon \mathcal{S}^*(Z) \rightarrow \mathcal{S}^*(Z)$ defined by
\begin{equation}\label{eq-3-14}
  \mathfrak{E}\Phi = \widehat{\Phi}(\emptyset)\textit{\textsf{R}}1,\quad \Phi \in \mathcal{S}^*(Z)
\end{equation}
is a continuous linear operator from $\mathcal{S}^*(Z)$ to itself. And moreover,
 \begin{equation}\label{eq-3-15}
   \widehat{\mathfrak{E}\Phi}(\sigma) = \widehat{\Phi}(\emptyset)\langle 1, Z_{\sigma}\rangle,\quad \sigma\in \Gamma,\, \Phi\in \mathcal{S}^*(Z).
 \end{equation}
\end{theorem}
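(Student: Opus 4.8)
The plan is to reduce everything to two facts—the Fock transform of the fixed generalized functional $\textit{\textsf{R}}1$ and the size of the scalar $\widehat{\Phi}(\emptyset)$—and then to run the very same inductive-limit continuity argument already used for $\mathfrak{a}_k$ and $\mathfrak{a}_k^{\dag}$ in Theorem~\ref{thr-3-1} and Theorem~\ref{thr-3-3}.

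First I would compute the Fock transform of $\textit{\textsf{R}}1$. By the defining property of the Riesz mapping in Remark~\ref{rem-2-1} together with Definition~\ref{def-2-3}, for every $\sigma\in\Gamma$ one has
\[
  \widehat{\textit{\textsf{R}}1}(\sigma)=\langle\!\langle \textit{\textsf{R}}1, Z_{\sigma}\rangle\!\rangle=\langle 1, Z_{\sigma}\rangle .
\]
Since $\mathfrak{E}\Phi=\widehat{\Phi}(\emptyset)\,\textit{\textsf{R}}1$ is merely the scalar $\widehat{\Phi}(\emptyset)$ times the fixed element $\textit{\textsf{R}}1$, linearity of the Fock transform yields $\widehat{\mathfrak{E}\Phi}(\sigma)=\widehat{\Phi}(\emptyset)\widehat{\textit{\textsf{R}}1}(\sigma)=\widehat{\Phi}(\emptyset)\langle 1, Z_{\sigma}\rangle$, which is exactly (\ref{eq-3-15}). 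Linearity of $\mathfrak{E}$ is then immediate, because $\Phi\mapsto\widehat{\Phi}(\emptyset)=\langle\!\langle \Phi, 1\rangle\!\rangle$ is linear in $\Phi$ and multiplication by the fixed $\textit{\textsf{R}}1$ is linear.

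For continuity I would fix $p\geq 0$ and show that $\mathcal{S}_p^*(Z)$ is invariant under $\mathfrak{E}$ with $\|\mathfrak{E}\Phi\|_{-p}\leq\|\Phi\|_{-p}$, so that the composition $\mathfrak{E}\circ\mathfrak{j}_p\colon\mathcal{S}_p^*(Z)\to\mathcal{S}^*(Z)$ is bounded. Two estimates, both via Theorem~\ref{thr-2-8}, suffice. On the one hand, since $\lambda_{\emptyset}=1$, retaining only the $\sigma=\emptyset$ term in (\ref{eq-2-17}) gives $|\widehat{\Phi}(\emptyset)|\leq\|\Phi\|_{-p}$. On the other hand, because $\{Z_{\sigma}\mid\sigma\in\Gamma\}$ is an orthonormal basis of $\mathcal{L}^2(Z)$, Parseval's identity gives
\[
  \sum_{\sigma\in\Gamma}\lambda_{\sigma}^{-2p}\big|\langle 1, Z_{\sigma}\rangle\big|^2
  \leq\sum_{\sigma\in\Gamma}\big|\langle 1, Z_{\sigma}\rangle\big|^2=\|1\|^2=1,
\]
so $\textit{\textsf{R}}1\in\mathcal{S}_p^*(Z)$ with $\|\textit{\textsf{R}}1\|_{-p}\leq 1$. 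Combining the two bounds, $\|\mathfrak{E}\Phi\|_{-p}=|\widehat{\Phi}(\emptyset)|\,\|\textit{\textsf{R}}1\|_{-p}\leq\|\Phi\|_{-p}$. Since $p\geq 0$ is arbitrary, $\mathfrak{E}$ is continuous for the inductive limit topology over $\mathcal{S}^*(Z)$, and Lemma~\ref{lem-2-5} upgrades this to continuity for the strong topology, exactly as in the closing paragraphs of Theorem~\ref{thr-3-1} and Theorem~\ref{thr-3-3}.

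I do not expect a genuine obstacle here: the statement collapses because $\mathfrak{E}$ has rank one, being a fixed vector $\textit{\textsf{R}}1$ scaled by the continuous scalar functional $\Phi\mapsto\widehat{\Phi}(\emptyset)$. The only point deserving care is to notice that the two clean norm bounds both spring from elementary features already available—$\lambda_{\emptyset}=1$ and the orthonormality of the canonical basis—so that $\mathfrak{E}$ is in fact a contraction on each $\mathcal{S}_p^*(Z)$; the remaining topological bookkeeping is identical to that carried out for the annihilation and creation operators and can be reused verbatim.
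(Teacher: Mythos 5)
Your proposal is correct and follows essentially the same route as the paper: verify \eqref{eq-3-15} from the definition of the Riesz mapping, then prove that $\mathfrak{E}$ is a contraction on each $\mathcal{S}_p^*(Z)$ via Theorem~\ref{thr-2-8}, and conclude continuity through the inductive limit topology and Lemma~\ref{lem-2-5}. The only cosmetic difference is that you factor the bound as $|\widehat{\Phi}(\emptyset)|\cdot\|\textit{\textsf{R}}1\|_{-p}$ while the paper estimates the weighted sum $\sum_{\sigma}\lambda_{\sigma}^{-2p}|\widehat{\Phi}(\emptyset)\langle 1,Z_{\sigma}\rangle|^2$ directly; both yield $\|\mathfrak{E}\Phi\|_{-p}\leq\|\Phi\|_{-p}$.
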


\begin{proof}
Clearly, $\mathfrak{E}\colon \mathcal{S}^*(Z) \rightarrow \mathcal{S}^*(Z)$ is a linear operator
and satisfies (\ref{eq-3-15}).
Next, let us show that $\mathfrak{E}\colon \mathcal{S}^*(Z) \rightarrow \mathcal{S}^*(Z)$ is continuous
with respect to the strong topology over $\mathcal{S}^*(Z)$.

Let $p\geq 0$ and denote by $\mathfrak{j}_k \colon \mathcal{S}_p^*(Z)\rightarrow \mathcal{S}^*(Z)$ the inclusion mapping.
Then the composition mapping $\mathfrak{E}\circ \mathfrak{j}_k$ is a linear operator from $\mathcal{S}_p^*(Z)$ to $\mathcal{S}^*(Z)$.
For each $\Phi \in \mathcal{S}_p^*(Z)$, we have
\begin{equation*}
\begin{split}
  \sum_{\sigma \in \Gamma}\lambda_{\sigma}^{-2p}\big|\widehat{\mathfrak{E}\circ \mathfrak{j}_k(\Phi)}(\sigma)\big|^2
  &= \sum_{\sigma \in \Gamma}\lambda_{\sigma}^{-2p}\big|\widehat{\mathfrak{E}\Phi}(\sigma)\big|^2\\
  &= \sum_{\sigma \in \Gamma}\lambda_{\sigma}^{-2p}\big|\widehat{\Phi}(\emptyset)\langle 1, Z_{\sigma}\rangle \big|^2\\
  &\leq \sum_{\sigma \in \Gamma}\lambda_{\sigma}^{-2p}\big|\widehat{\Phi}(\sigma)\big|^2,
\end{split}
\end{equation*}
which together with Theorem~\ref{thr-2-8} implies that $\mathfrak{E}\circ \mathfrak{j}_k(\Phi)\in \mathcal{S}_p^*(Z)$
and
\begin{equation*}
  \big\|\mathfrak{E}\circ \mathfrak{j}_k(\Phi)\big\|_{-p} \leq  \big\|\Phi\big\|_{-p}.
\end{equation*}
Thus $\mathfrak{E}\circ \mathfrak{j}_k\big(\mathcal{S}_p^*(Z)\big)\subset \mathcal{S}_p^*(Z)$
and $\mathfrak{E}\circ\mathfrak{j}_k \colon \mathcal{S}_p^*(Z)\rightarrow \mathcal{S}_p^*(Z)$
is a bounded operator,
which implies that $\mathfrak{E}\circ \mathfrak{j}_k$ is continuous as an operator
from $\mathcal{S}_p^*(Z)$ to $\mathcal{S}^*(Z)$.

Since the choice of the above $p\geq 0$ is arbitrary,  we actually arrive at a conclusion that
the composition mapping $\mathfrak{E}\circ \mathfrak{j}_k\colon \mathcal{S}_p^*(Z)\rightarrow \mathcal{S}^*(Z)$
is continuous for all $p\geq 0$.
Therefore $\mathfrak{E}\colon \mathcal{S}^*(Z) \rightarrow \mathcal{S}^*(Z)$
is continuous with respect to the inductive limit topology over $\mathcal{S}^*(Z)$,
which together with Lemma~\ref{lem-2-5} implies that $\mathfrak{E}\colon \mathcal{S}^*(Z) \rightarrow \mathcal{S}^*(Z)$
is continuous with respect to the strong topology over $\mathcal{S}^*(Z)$.
\end{proof}

\begin{definition}\label{def-3-2}
The operator $\mathfrak{E}$ is called the expectation operator on generalized functionals of $Z$.
\end{definition}

Since $1\in \mathcal{L}^2(Z)$, the expectation $\mathbb{E}$ with respect to $P$ is actually a bounded operator from $\mathcal{L}^2(Z)$ to itself.
The next theorem shows the link between the operators $\mathfrak{E}$ and $\mathbb{E}$, which justifies the above definition.

\begin{theorem}\label{thr-3-8}
It holds that $\mathfrak{E}\textit{\textsf{R}} = \textit{\textsf{R}}\,\mathbb{E}$, where $\textit{\textsf{R}}$ is the Riesz mapping.
\end{theorem}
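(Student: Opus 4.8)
The plan is to verify the operator identity $\mathfrak{E}\textit{\textsf{R}} = \textit{\textsf{R}}\,\mathbb{E}$ by comparing Fock transforms, since a generalized functional of $Z$ is completely determined by its Fock transform. Thus it suffices to fix an arbitrary $\eta \in \mathcal{L}^2(Z)$ and to show that $\mathfrak{E}\textit{\textsf{R}}\eta$ and $\textit{\textsf{R}}\,\mathbb{E}\eta$ have the same Fock transform at every $\sigma \in \Gamma$.

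First I would compute the left-hand side. Applying the explicit formula \eqref{eq-3-15} for $\mathfrak{E}$ with $\Phi = \textit{\textsf{R}}\eta$ gives $\widehat{\mathfrak{E}\textit{\textsf{R}}\eta}(\sigma) = \widehat{\textit{\textsf{R}}\eta}(\emptyset)\langle 1, Z_{\sigma}\rangle$ for every $\sigma \in \Gamma$. The scalar $\widehat{\textit{\textsf{R}}\eta}(\emptyset)$ is then evaluated from the definition of the Fock transform together with the defining property \eqref{eq-2-18} of the Riesz mapping: recalling that $Z_{\emptyset} = 1$, one has $\widehat{\textit{\textsf{R}}\eta}(\emptyset) = \langle\!\langle \textit{\textsf{R}}\eta, Z_{\emptyset}\rangle\!\rangle = \langle \eta, 1\rangle$.

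Next I would compute the right-hand side. Here the key point is to read $\mathbb{E}\eta$ correctly as the constant functional $(\mathbb{E}\eta)\,1 \in \mathcal{L}^2(Z)$, so that $\widehat{\textit{\textsf{R}}\,\mathbb{E}\eta}(\sigma) = \langle\!\langle \textit{\textsf{R}}\,\mathbb{E}\eta, Z_{\sigma}\rangle\!\rangle = \langle (\mathbb{E}\eta)\,1, Z_{\sigma}\rangle$, where again \eqref{eq-2-18} is used. Since $\langle\cdot,\cdot\rangle$ is conjugate-linear in its first argument, this equals $\overline{\mathbb{E}\eta}\,\langle 1, Z_{\sigma}\rangle$.

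To match the two sides, I would observe that $\langle \eta, 1\rangle = \mathbb{E}[\overline{\eta}] = \overline{\mathbb{E}\eta}$, again because the inner product is conjugate-linear in its first slot and because expectation commutes with complex conjugation. Hence both Fock transforms equal $\overline{\mathbb{E}\eta}\,\langle 1, Z_{\sigma}\rangle$ for every $\sigma \in \Gamma$, which forces $\mathfrak{E}\textit{\textsf{R}}\eta = \textit{\textsf{R}}\,\mathbb{E}\eta$; the arbitrariness of $\eta$ then yields $\mathfrak{E}\textit{\textsf{R}} = \textit{\textsf{R}}\,\mathbb{E}$. The computation is entirely elementary, so there is no genuine obstacle; the only place demanding care is the bookkeeping of the complex conjugation coming from the inner-product convention, and the correct interpretation of $\mathbb{E}$ as the operator $\eta \mapsto (\mathbb{E}\eta)\,1$, with the identity $Z_{\emptyset}=1$ being what ties the two constant factors together.
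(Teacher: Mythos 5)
Your proposal is correct and follows essentially the same route as the paper: both verify the identity by computing the Fock transforms of $\mathfrak{E}\textit{\textsf{R}}\eta$ and $\textit{\textsf{R}}\,\mathbb{E}\eta$ and matching them via $\widehat{\textit{\textsf{R}}\eta}(\emptyset)=\langle \eta, Z_{\emptyset}\rangle=\overline{\mathbb{E}\eta}$. Your extra care with the conjugate-linearity bookkeeping is exactly what the paper's chain of equalities implicitly uses.
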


\begin{proof}
For any $\xi \in \mathcal{L}^2(Z)$ and any $\sigma\in \Gamma$, by a direct computation, we have
\begin{equation*}
  \widehat{\textit{\textsf{R}}\,\mathbb{E}\xi}(\sigma)
= \langle \mathbb{E}\xi, Z_{\sigma}\rangle
= \langle \xi, Z_{\emptyset}\rangle \langle 1, Z_{\sigma}\rangle
= \widehat{\textit{\textsf{R}}\xi}(\emptyset)\langle 1, Z_{\sigma}\rangle
= \widehat{\mathfrak{E}\textit{\textsf{R}}\xi}(\sigma).
\end{equation*}
Thus $\mathfrak{E}\textit{\textsf{R}} = \textit{\textsf{R}}\,\mathbb{E}$.
\end{proof}

\begin{theorem}\label{thr-3-9}
Let $k\geq 0$. Then there exists a continuous linear operator $\mathfrak{E}_k\colon \mathcal{S}^*(Z) \rightarrow \mathcal{S}^*(Z)$
 such that
 \begin{equation}\label{eq-conditional}
   \widehat{\mathfrak{E}_k\Phi}(\sigma) = \mathbf{1}_{\Gamma\!_{k]}}(\sigma)\widehat{\Phi}(\sigma),\quad \sigma\in \Gamma,
 \end{equation}
where $\Gamma\!_{k]} = \{\sigma \in \Gamma \mid \max \sigma \leq k\}$ and $\mathbf{1}_{\Gamma\!_{k]}}(\cdot)$ denotes the indicator of $\Gamma\!_{k]}$.
\end{theorem}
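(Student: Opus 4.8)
The plan is to follow exactly the template already used in the proofs of Theorems~\ref{thr-3-1}, \ref{thr-3-3}, and especially \ref{thr-3-7}: first produce the operator pointwise at the level of Fock transforms via the characterization in Theorem~\ref{thr-2-7}, then establish continuity by verifying the norm estimate of Theorem~\ref{thr-2-8} on each stage space $\mathcal{S}_p^*(Z)$ and invoking Lemma~\ref{lem-2-5} to pass from the inductive limit topology to the strong topology. This operator is the generalized conditional expectation given $\mathcal{F}_k$, and the indicator $\mathbf{1}_{\Gamma\!_{k]}}$ plays the role of a truncation that can only shrink norms, so the estimates are even milder than in the annihilation/creation cases.

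First I would fix $\Phi \in \mathcal{S}^*(Z)$ and apply Theorem~\ref{thr-2-7} to obtain constants $C,\,p\geq 0$ with $|\widehat{\Phi}(\sigma)| \leq C\lambda_{\sigma}^{p}$ for all $\sigma \in \Gamma$. The candidate function $\sigma \mapsto \mathbf{1}_{\Gamma\!_{k]}}(\sigma)\widehat{\Phi}(\sigma)$ then satisfies $\big|\mathbf{1}_{\Gamma\!_{k]}}(\sigma)\widehat{\Phi}(\sigma)\big| \leq C\lambda_{\sigma}^{p}$, since the indicator takes values in $\{0,1\}$. Hence Theorem~\ref{thr-2-7} supplies a unique element of $\mathcal{S}^*(Z)$ whose Fock transform equals this function; denoting it by $\mathfrak{E}_k\Phi$ and letting $\Phi$ vary produces a linear operator $\mathfrak{E}_k\colon \mathcal{S}^*(Z)\rightarrow\mathcal{S}^*(Z)$ satisfying (\ref{eq-conditional}).

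For continuity I would fix $p\geq 0$, write $\mathfrak{j}_k\colon \mathcal{S}_p^*(Z)\rightarrow \mathcal{S}^*(Z)$ for the inclusion mapping, and for each $\Phi \in \mathcal{S}_p^*(Z)$ compute
\[
\sum_{\sigma \in \Gamma} \lambda_{\sigma}^{-2p}\big|\widehat{\mathfrak{E}_k\Phi}(\sigma)\big|^2
= \sum_{\sigma \in \Gamma} \lambda_{\sigma}^{-2p}\mathbf{1}_{\Gamma\!_{k]}}(\sigma)\big|\widehat{\Phi}(\sigma)\big|^2
\leq \sum_{\sigma \in \Gamma} \lambda_{\sigma}^{-2p}\big|\widehat{\Phi}(\sigma)\big|^2,
\]
which by Theorem~\ref{thr-2-8} shows $\mathfrak{E}_k\circ\mathfrak{j}_k(\Phi) \in \mathcal{S}_p^*(Z)$ with $\|\mathfrak{E}_k\Phi\|_{-p} \leq \|\Phi\|_{-p}$. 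Thus each $\mathcal{S}_p^*(Z)$ is invariant and $\mathfrak{E}_k\circ\mathfrak{j}_k$ is bounded, hence continuous as a map into $\mathcal{S}^*(Z)$. Since $p\geq 0$ is arbitrary, $\mathfrak{E}_k$ is continuous for the inductive limit topology, which by Lemma~\ref{lem-2-5} coincides with the strong topology on $\mathcal{S}^*(Z)$.

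I do not expect any genuine obstacle in this argument: the only step requiring care is recognizing that the truncation estimate is strictly cleaner than in Theorems~\ref{thr-3-1} and \ref{thr-3-3}, since the indicator contributes a factor bounded by $1$ rather than a power $(1+k)^{\pm p}$. Consequently this proof is the direct analogue of the expectation-operator argument of Theorem~\ref{thr-3-7}, and the bound $\|\mathfrak{E}_k\Phi\|_{-p}\leq\|\Phi\|_{-p}$ comes out automatically, exhibiting $\mathfrak{E}_k$ as a norm-nonincreasing projection on every $\mathcal{S}_p^*(Z)$.
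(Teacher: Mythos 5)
Your proposal is correct and is exactly the argument the paper intends: the authors omit the proof of Theorem~\ref{thr-3-9} with the remark that it is ``quite similar to that of Theorem~\ref{thr-3-1}'', and your two-step scheme (existence via Theorem~\ref{thr-2-7}, continuity via the stagewise bound from Theorem~\ref{thr-2-8} together with Lemma~\ref{lem-2-5}) is precisely that template, with the indicator yielding the clean contraction bound $\|\mathfrak{E}_k\Phi\|_{-p}\leq\|\Phi\|_{-p}$ that the paper records separately as Theorem~\ref{thr-3-10}.
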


\begin{proof}
We omit the proof because it is quite similar to that of Theorem~\ref{thr-3-1}.
\end{proof}

By using Theorem~\ref{thr-2-8} and Theorem~\ref{thr-3-9}, we can easily prove the next theorem,
which shows that the operator $\mathfrak{E}_k$ has a type of contraction property on $\mathcal{S}^*(Z)$.

\begin{theorem}\label{thr-3-10}
Let $k\geq 0$. Then, for each $p\geq 0$, $\mathcal{S}_p^*(Z)$ keeps invariant under the action of $\mathfrak{E}_k$,
and moreover
\begin{equation}\label{}
  \|\mathfrak{E}_k\Phi\|_{-p} \leq \|\Phi\|_{-p},\quad \forall\, \Phi \in \mathcal{S}_p^*(Z).
\end{equation}
\end{theorem}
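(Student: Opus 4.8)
The plan is to mimic the norm-estimate arguments already used for Theorems~\ref{thr-3-2} and \ref{thr-3-4}, invoking the characterization of the spaces $\mathcal{S}_p^*(Z)$ through Fock transforms provided by Theorem~\ref{thr-2-8}. The essential observation is that, unlike the annihilation and creation operators whose Fock-transform multipliers carry a factor $(1+k)^{\pm p}$, the operator $\mathfrak{E}_k$ acts via the indicator $\mathbf{1}_{\Gamma\!_{k]}}$, whose values lie in $\{0,1\}$. Hence the passage from $\Phi$ to $\mathfrak{E}_k\Phi$ can only delete terms from the defining series, never enlarge them, and the contraction bound should fall out with constant $1$.

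First I would fix $k\geq 0$, $p\geq 0$, and take an arbitrary $\Phi\in\mathcal{S}_p^*(Z)$. Using the defining identity \eqref{eq-conditional} for $\widehat{\mathfrak{E}_k\Phi}$, I would write
\begin{equation*}
\begin{split}
  \sum_{\sigma\in\Gamma}\lambda_{\sigma}^{-2p}\big|\widehat{\mathfrak{E}_k\Phi}(\sigma)\big|^2
  &= \sum_{\sigma\in\Gamma}\lambda_{\sigma}^{-2p}\big|\mathbf{1}_{\Gamma\!_{k]}}(\sigma)\widehat{\Phi}(\sigma)\big|^2\\
  &= \sum_{\sigma\in\Gamma\!_{k]}}\lambda_{\sigma}^{-2p}\big|\widehat{\Phi}(\sigma)\big|^2
  \leq \sum_{\sigma\in\Gamma}\lambda_{\sigma}^{-2p}\big|\widehat{\Phi}(\sigma)\big|^2,
\end{split}
\end{equation*}
where the last inequality holds simply because $\Gamma\!_{k]}\subset\Gamma$ and the summands are nonnegative.

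By Theorem~\ref{thr-2-8}, the rightmost series equals $\|\Phi\|_{-p}^2$, which is finite since $\Phi\in\mathcal{S}_p^*(Z)$. Thus the series $\sum_{\sigma\in\Gamma}\lambda_{\sigma}^{-2p}\big|\widehat{\mathfrak{E}_k\Phi}(\sigma)\big|^2$ converges, and a second application of Theorem~\ref{thr-2-8} yields both $\mathfrak{E}_k\Phi\in\mathcal{S}_p^*(Z)$ and the norm identity $\|\mathfrak{E}_k\Phi\|_{-p}^2=\sum_{\sigma\in\Gamma}\lambda_{\sigma}^{-2p}\big|\widehat{\mathfrak{E}_k\Phi}(\sigma)\big|^2$. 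Combining this with the displayed estimate gives $\|\mathfrak{E}_k\Phi\|_{-p}\leq\|\Phi\|_{-p}$, and the invariance of $\mathcal{S}_p^*(Z)$ under $\mathfrak{E}_k$ is exactly the membership statement just obtained. I do not anticipate any genuine obstacle: the only point requiring care is the bookkeeping that restricting the sum to $\Gamma\!_{k]}$ coincides with inserting the squared indicator, after which the bound is immediate. This is in fact the mildest of the three invariance results, precisely because the Fock multiplier is an indicator rather than a weight that can exceed $1$.
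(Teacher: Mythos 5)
Your argument is correct and is precisely the one the paper intends: the paper gives no explicit proof, remarking only that the result follows easily from Theorem~\ref{thr-2-8} and Theorem~\ref{thr-3-9}, which is exactly the combination you carry out (the indicator multiplier in \eqref{eq-conditional} only discards terms from the norm series, so Theorem~\ref{thr-2-8} gives both membership in $\mathcal{S}_p^*(Z)$ and the bound with constant $1$).
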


\begin{definition}\label{def-3-3}
The operators $\mathfrak{E}_k$, $k\geq 0$,  are called the conditional expectation operators
on generalized functionals of $Z$.
\end{definition}

For $k\geq 0$, we set $P_k = \mathbb{E}[\cdot\,|\, \mathcal{F}_k]$,
the expectation given $\mathcal{F}_k$, where $\mathcal{F}_k$ is the $\sigma$-filed generated by $(Z_j; 0\leq j \leq k)$ as mentioned above.
$P_k$ is usually known as a conditional expectation operator on square integrable functionals of $Z$.
The theorem below then justifies Definition~\ref{def-3-3}.

\begin{theorem}\label{thr-3-11}
For each $k\geq 0$, it holds that
$\mathfrak{E}_k\textit{\textsf{R}}= \textit{\textsf{R}} P_k$, where $\textit{\textsf{R}}$ is the Riesz mapping.
\end{theorem}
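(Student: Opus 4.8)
The plan is to use the fact, already established in the excerpt, that a generalized functional of $Z$ is completely determined by its Fock transform. Hence it suffices to show that the two continuous operators $\mathfrak{E}_k\textit{\textsf{R}}$ and $\textit{\textsf{R}}P_k$ from $\mathcal{L}^2(Z)$ into $\mathcal{S}^*(Z)$ have the same Fock transform on every $\xi\in\mathcal{L}^2(Z)$. First I would unwind both sides. Using \eqref{eq-conditional} together with the identity $\widehat{\textit{\textsf{R}}\xi}(\sigma)=\langle \xi, Z_\sigma\rangle$ coming from \eqref{eq-2-18}, one gets
\begin{equation*}
  \widehat{\mathfrak{E}_k\textit{\textsf{R}}\xi}(\sigma)=\mathbf{1}_{\Gamma\!_{k]}}(\sigma)\,\langle \xi, Z_\sigma\rangle,\quad \sigma\in\Gamma,
\end{equation*}
while \eqref{eq-2-18} alone gives
\begin{equation*}
  \widehat{\textit{\textsf{R}}P_k\xi}(\sigma)=\langle P_k\xi, Z_\sigma\rangle,\quad \sigma\in\Gamma.
\end{equation*}
Thus the whole theorem reduces to the single scalar identity $\langle P_k\xi, Z_\sigma\rangle=\mathbf{1}_{\Gamma\!_{k]}}(\sigma)\langle \xi, Z_\sigma\rangle$ for all $\xi\in\mathcal{L}^2(Z)$ and all $\sigma\in\Gamma$.

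The key step I would carry out to settle this identity is the explicit computation of the conditional expectation of a basis element, namely
\begin{equation*}
  P_kZ_\sigma=\mathbb{E}[Z_\sigma\mid\mathcal{F}_k]=\mathbf{1}_{\Gamma\!_{k]}}(\sigma)\,Z_\sigma,\quad \sigma\in\Gamma.
\end{equation*}
If $\max\sigma\leq k$, then $Z_\sigma=\prod_{i\in\sigma}Z_i$ is $\mathcal{F}_k$-measurable, so the conditional expectation leaves it fixed. If $\max\sigma=m>k$, I would factor $Z_\sigma=Z_{\sigma\setminus m}Z_m$, note that $Z_{\sigma\setminus m}$ is $\mathcal{F}_{m-1}$-measurable, and apply the tower property together with the defining property $\mathbb{E}[Z_m\mid\mathcal{F}_{m-1}]=0$ of a discrete-time normal noise to obtain $\mathbb{E}[Z_\sigma\mid\mathcal{F}_{m-1}]=0$, and hence $\mathbb{E}[Z_\sigma\mid\mathcal{F}_k]=0$ since $\mathcal{F}_k\subset\mathcal{F}_{m-1}$.

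Finally, I would combine this computation with the self-adjointness of $P_k$ on $\mathcal{L}^2(Z)$ (it is the orthogonal projection onto $\mathcal{L}^2(\Omega,\mathcal{F}_k,P)$), which yields
\begin{equation*}
  \langle P_k\xi, Z_\sigma\rangle=\langle \xi, P_kZ_\sigma\rangle=\mathbf{1}_{\Gamma\!_{k]}}(\sigma)\langle \xi, Z_\sigma\rangle,
\end{equation*}
matching the two Fock transforms and thereby proving $\mathfrak{E}_k\textit{\textsf{R}}=\textit{\textsf{R}}P_k$. I do not expect a genuine obstacle here: the argument is structurally parallel to the proofs of Theorem~\ref{thr-3-5} and Theorem~\ref{thr-3-8}, and the only substantive content is the martingale computation of $\mathbb{E}[Z_\sigma\mid\mathcal{F}_k]$. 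The sole point requiring mild care is the conjugate-linear convention of $\langle\cdot,\cdot\rangle$ when invoking self-adjointness of $P_k$; this is harmless since $P_k$ commutes with complex conjugation, and one may in any case bypass self-adjointness entirely by verifying $\langle P_k\xi,Z_\sigma\rangle=\mathbf{1}_{\Gamma\!_{k]}}(\sigma)\langle\xi,Z_\sigma\rangle$ directly in the two cases $\sigma\in\Gamma\!_{k]}$ and $\sigma\notin\Gamma\!_{k]}$ from the definition of conditional expectation.
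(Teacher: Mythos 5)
Your proposal is correct and follows exactly the paper's route: both reduce the identity to $\widehat{\textit{\textsf{R}}P_k\xi}(\sigma)=\langle P_k\xi,Z_{\sigma}\rangle=\langle\xi,P_kZ_{\sigma}\rangle=\mathbf{1}_{\Gamma\!_{k]}}(\sigma)\langle\xi,Z_{\sigma}\rangle=\widehat{\mathfrak{E}_k\textit{\textsf{R}}\xi}(\sigma)$. The only difference is that you spell out the martingale computation $\mathbb{E}[Z_{\sigma}\mid\mathcal{F}_k]=\mathbf{1}_{\Gamma\!_{k]}}(\sigma)Z_{\sigma}$, which the paper leaves as a ``direct computation.''
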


\begin{proof}
Let $k\geq 0$. Then,
for any $\xi \in \mathcal{L}^2(Z)$ and any $\sigma\in \Gamma$, by a direct computation, we have
\begin{equation*}
  \widehat{\textit{\textsf{R}}P_k\xi}(\sigma)
= \langle P_k\xi, Z_{\sigma}\rangle
= \langle \xi, P_kZ_{\sigma}\rangle
= \mathbf{1}_{\Gamma\!_{k]}}(\sigma)\langle \xi, Z_{\sigma}\rangle
= \mathbf{1}_{\Gamma\!_{k]}}(\sigma) \widehat{\textit{\textsf{R}}\xi}(\sigma)
= \widehat{\mathfrak{E}_k\textit{\textsf{R}}\xi}(\sigma).
\end{equation*}
Thus $\mathfrak{E}_k\textit{\textsf{R}} = \textit{\textsf{R}}P_k$.
\end{proof}

\subsection{Clark-Ocone formula for generalized functionals}\label{subsec-3-3}

In this subsection, we establish our Clark-Ocone formula for generalized functionals of $Z$.

\begin{theorem}\label{thr-3-12}
For all generalized functional $\Phi\in \mathcal{S}^*(Z)$, it holds that
 \begin{equation}\label{eq-3-18}
   \Phi = \mathfrak{E}\Phi + \sum_{k=0}^{\infty}\mathfrak{E}_k\mathfrak{a}_k^{\dag}\mathfrak{a}_k\Phi,
 \end{equation}
where the series on the righthand side converges strongly in $\mathcal{S}^*(Z)$.
\end{theorem}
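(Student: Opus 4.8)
The plan is to verify the identity at the level of Fock transforms and then upgrade the resulting equality of partial sums to strong convergence via Theorem~\ref{thr-2-9}. Since a generalized functional is completely determined by its Fock transform, and since Theorem~\ref{thr-2-9} characterizes strong convergence in $\mathcal{S}^*(Z)$ through pointwise convergence of Fock transforms together with a uniform growth bound, everything reduces to a computation with the functions $\widehat{\,\cdot\,}(\sigma)$ on $\Gamma$.

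First I would compute the Fock transform of a single summand. From (\ref{eq-3-12}) we already have $\widehat{\mathfrak{a}_k^{\dag}\mathfrak{a}_k\Phi}(\sigma) = \mathbf{1}_{\sigma}(k)\widehat{\Phi}(\sigma)$, and applying $\mathfrak{E}_k$ via (\ref{eq-conditional}) yields
\begin{equation*}
  \widehat{\mathfrak{E}_k\mathfrak{a}_k^{\dag}\mathfrak{a}_k\Phi}(\sigma) = \mathbf{1}_{\Gamma\!_{k]}}(\sigma)\mathbf{1}_{\sigma}(k)\widehat{\Phi}(\sigma).
\end{equation*}
The key observation is that $\mathbf{1}_{\Gamma\!_{k]}}(\sigma)\mathbf{1}_{\sigma}(k)=1$ exactly when $k\in\sigma$ and $\max\sigma\leq k$, i.e.\ precisely when $k=\max\sigma$; thus the coefficient of $\widehat{\Phi}(\sigma)$ is $\mathbf{1}_{\{\max\sigma=k\}}(\sigma)$, so the summation index $k$ partitions $\Gamma\setminus\{\emptyset\}$ according to the largest element of $\sigma$. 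Combined with $\widehat{\mathfrak{E}\Phi}(\sigma)=\widehat{\Phi}(\emptyset)\langle 1,Z_{\sigma}\rangle=\widehat{\Phi}(\emptyset)\mathbf{1}_{\{\emptyset\}}(\sigma)$ (using orthonormality of $\{Z_{\sigma}\}$), this is exactly the mechanism that reconstructs $\widehat{\Phi}$.

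Next I would set $\Psi_N=\mathfrak{E}\Phi+\sum_{k=0}^{N}\mathfrak{E}_k\mathfrak{a}_k^{\dag}\mathfrak{a}_k\Phi$ and check the two hypotheses of Theorem~\ref{thr-2-9}. For pointwise convergence: when $\sigma=\emptyset$ only the expectation term survives and $\widehat{\Psi_N}(\emptyset)=\widehat{\Phi}(\emptyset)$ for every $N$; when $\sigma\neq\emptyset$ the expectation term vanishes and exactly one summand contributes once $N\geq\max\sigma$, giving $\widehat{\Psi_N}(\sigma)=\widehat{\Phi}(\sigma)$ for all large $N$. Hence $\widehat{\Psi_N}(\sigma)\to\widehat{\Phi}(\sigma)$ for every $\sigma\in\Gamma$. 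For the uniform bound, the same case analysis shows $|\widehat{\Psi_N}(\sigma)|\leq|\widehat{\Phi}(\sigma)|$ for all $N$ and all $\sigma$; since $\Phi\in\mathcal{S}^*(Z)$, Theorem~\ref{thr-2-7} furnishes constants $C$, $p\geq 0$ with $|\widehat{\Phi}(\sigma)|\leq C\lambda_{\sigma}^p$, whence $\sup_N|\widehat{\Psi_N}(\sigma)|\leq C\lambda_{\sigma}^p$. Theorem~\ref{thr-2-9} then yields $\Psi_N\to\Phi$ strongly in $\mathcal{S}^*(Z)$, which is precisely (\ref{eq-3-18}).

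There is no serious analytic obstacle once the combinatorial identity $\mathbf{1}_{\Gamma\!_{k]}}(\sigma)\mathbf{1}_{\sigma}(k)=\mathbf{1}_{\{\max\sigma=k\}}(\sigma)$ is in hand; the crux is recognizing that the operators $\mathfrak{E}_k\mathfrak{a}_k^{\dag}\mathfrak{a}_k$ decompose the identity (off the vacuum) into orthogonal maximum-element projections. The only point requiring care is that Theorem~\ref{thr-2-9} delivers convergence together with the identification of the limit through its Fock transform, so I must confirm the uniform bound holds with constants independent of $N$ --- which it does because each $\widehat{\Psi_N}(\sigma)$ equals either $0$ or $\widehat{\Phi}(\sigma)$.
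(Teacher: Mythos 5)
Your proposal is correct and follows essentially the same route as the paper: compute the Fock transform of the partial sums, observe that $\mathbf{1}_{\Gamma\!_{k]}}(\sigma)\mathbf{1}_{\sigma}(k)$ singles out $k=\max\sigma$, and invoke Theorem~\ref{thr-2-9} with the uniform bound $|\widehat{\Psi_N}(\sigma)|\leq|\widehat{\Phi}(\sigma)|\leq C\lambda_{\sigma}^p$ from Theorem~\ref{thr-2-7}. The only cosmetic difference is that the paper's partial sums omit the $\mathfrak{E}\Phi$ term and converge to $\Phi-\mathfrak{E}\Phi$, whereas you fold $\mathfrak{E}\Phi$ into $\Psi_N$; this changes nothing.
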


\begin{proof}
Let $\Phi \in \mathcal{S}^*(Z)$ and $\Psi_n= \sum_{k=0}^n\mathfrak{E}_k\mathfrak{a}_k^{\dag}\mathfrak{a}_k\Phi$ for $n\geq 0$.
Then, for $\sigma \in \Gamma$, by a direct computation,
we have
\begin{equation}\label{eq-3-19}
  \widehat{\Psi_n}(\sigma)
= \sum_{k=0}^n\mathbf{1}_{\Gamma\!_{k]}}(\sigma)\mathbf{1}_{\sigma}(k)\widehat{\Phi}(\sigma)
=
\left\{
  \begin{array}{ll}
    0, & \hbox{$\sigma = \emptyset$;} \\
    0, & \hbox{$\sigma \neq \emptyset$, $n< \max \sigma$;} \\
    \widehat{\Phi}(\sigma), & \hbox{$\sigma \neq \emptyset$, $n\geq \max \sigma$.}
  \end{array}
\right.
\end{equation}
It then follows that $\widehat{\Psi_n}(\sigma) \rightarrow \widehat{\Phi - \mathfrak{E}\Phi}(\sigma)$ for all $\sigma \in \Gamma$
as $n\rightarrow \infty$.
On the other hand, by Theorem~\ref{thr-2-7}, there are constants $C\geq 0$ and $p\geq 0$ such that
\begin{equation*}
  \big|\widehat{\Phi}(\sigma)\big| \leq C \lambda_{\sigma}^p,\quad \sigma\in \Gamma,
\end{equation*}
which together with (\ref{eq-3-20}) gives
\begin{equation*}
  \sup_{n\geq 0}\big|\widehat{\Psi_n}(\sigma)\big|\leq \big|\widehat{\Phi}(\sigma)\big|\leq C \lambda_{\sigma}^p,\quad \sigma\in \Gamma.
\end{equation*}
Therefore, by Theorem~\ref{thr-2-9}, we know $(\Psi_n)$ converges strongly to $\Phi - \mathfrak{E}\Phi$ in $\mathcal{S}^*(Z)$.
This completes the proof.
\end{proof}

\begin{proposition}\label{prop-3-13}
For each $k\geq 0$, it holds that
\begin{equation}\label{eq-3-20}
\mathfrak{E}_k\mathfrak{a}_k^{\dag} = \mathfrak{a}_k^{\dag}\mathfrak{E}_k,\quad
\mathfrak{E}_k\mathfrak{a}_k = \mathfrak{E}_{k-1}\mathfrak{a}_k,
\end{equation}
where $\mathfrak{E}_{-1} = \mathfrak{E}$.
\end{proposition}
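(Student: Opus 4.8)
The plan is to use the fact, recorded just after Definition~\ref{def-2-3}, that a generalized functional of $Z$ is completely determined by its Fock transform; hence to establish each of the two operator identities in (\ref{eq-3-20}) it suffices to check that the Fock transforms of the two sides agree at every $\sigma\in\Gamma$. First I would collect the three transform rules I will use repeatedly, namely the annihilation rule (\ref{eq-annihilation}), the creation rule (\ref{eq-creation}), and the conditional-expectation rule (\ref{eq-conditional}). I would also note that, under the convention $\mathfrak{E}_{-1}=\mathfrak{E}$, the rule (\ref{eq-conditional}) persists for the index $k=-1$: by Theorem~\ref{thr-3-7} we have $\widehat{\mathfrak{E}\Phi}(\sigma)=\widehat{\Phi}(\emptyset)\langle 1,Z_{\sigma}\rangle=\widehat{\Phi}(\emptyset)\mathbf{1}_{\{\emptyset\}}(\sigma)$, and since $\Gamma\!_{-1]}=\{\emptyset\}$ this is exactly $\mathbf{1}_{\Gamma\!_{-1]}}(\sigma)\widehat{\Phi}(\sigma)$.

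For the first identity I would fix $\Phi\in\mathcal{S}^*(Z)$ and $\sigma\in\Gamma$ and compute both sides by composing the rules in the appropriate order, obtaining
\[
  \widehat{\mathfrak{E}_k\mathfrak{a}_k^{\dag}\Phi}(\sigma)
  =\mathbf{1}_{\Gamma\!_{k]}}(\sigma)\,\mathbf{1}_{\sigma}(k)\,\widehat{\Phi}(\sigma\setminus k),
  \qquad
  \widehat{\mathfrak{a}_k^{\dag}\mathfrak{E}_k\Phi}(\sigma)
  =\mathbf{1}_{\sigma}(k)\,\mathbf{1}_{\Gamma\!_{k]}}(\sigma\setminus k)\,\widehat{\Phi}(\sigma\setminus k).
\]
Matching these reduces the identity to the purely combinatorial claim $\mathbf{1}_{\sigma}(k)\mathbf{1}_{\Gamma\!_{k]}}(\sigma)=\mathbf{1}_{\sigma}(k)\mathbf{1}_{\Gamma\!_{k]}}(\sigma\setminus k)$, which I would settle by a case split: if $k\notin\sigma$ both sides vanish, and if $k\in\sigma$ then $\max\sigma\le k$ holds precisely when $\max(\sigma\setminus k)\le k$, so the two indicators of $\Gamma\!_{k]}$ agree.

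For the second identity I would argue the same way, now invoking the annihilation rule and applying (\ref{eq-conditional}) with indices $k$ and $k-1$:
\[
  \widehat{\mathfrak{E}_k\mathfrak{a}_k\Phi}(\sigma)
  =\mathbf{1}_{\Gamma\!_{k]}}(\sigma)\,[1-\mathbf{1}_{\sigma}(k)]\,\widehat{\Phi}(\sigma\cup k),
  \qquad
  \widehat{\mathfrak{E}_{k-1}\mathfrak{a}_k\Phi}(\sigma)
  =\mathbf{1}_{\Gamma\!_{k-1]}}(\sigma)\,[1-\mathbf{1}_{\sigma}(k)]\,\widehat{\Phi}(\sigma\cup k).
\]
It then remains to verify $[1-\mathbf{1}_{\sigma}(k)]\mathbf{1}_{\Gamma\!_{k]}}(\sigma)=[1-\mathbf{1}_{\sigma}(k)]\mathbf{1}_{\Gamma\!_{k-1]}}(\sigma)$, and again a split on membership does it: if $k\in\sigma$ both sides vanish, while if $k\notin\sigma$ the condition $\max\sigma\le k$ forces $\max\sigma\le k-1$ (the value $k$ being excluded), so $\sigma\in\Gamma\!_{k]}$ iff $\sigma\in\Gamma\!_{k-1]}$; the convention $\mathfrak{E}_{-1}=\mathfrak{E}$ with $\Gamma\!_{-1]}=\{\emptyset\}$ covers the base case $k=0$. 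In each identity, equality of the Fock transforms at every $\sigma\in\Gamma$ yields the operator equality by injectivity of the Fock transform.

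I do not expect a genuine obstacle here: once the transform rules are in hand the argument is a short composition followed by elementary indicator algebra. The only points demanding care are the bookkeeping of $\mathbf{1}_{\Gamma\!_{k]}}$ under the set operations $\sigma\mapsto\sigma\cup k$ and $\sigma\mapsto\sigma\setminus k$, and the correct handling of the degenerate index $k-1=-1$ via the convention $\mathfrak{E}_{-1}=\mathfrak{E}$.
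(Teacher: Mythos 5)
Your proposal is correct and follows essentially the same route as the paper's proof: compute the Fock transforms of both sides via the rules (\ref{eq-annihilation}), (\ref{eq-creation}), (\ref{eq-conditional}), reduce each identity to an elementary indicator equation such as $\mathbf{1}_{\Gamma\!_{k]}}(\sigma)\mathbf{1}_{\sigma}(k)=\mathbf{1}_{\Gamma\!_{k]}}(\sigma\setminus k)\mathbf{1}_{\sigma}(k)$, and conclude by injectivity of the Fock transform. You merely spell out the second identity and the degenerate case $\mathfrak{E}_{-1}=\mathfrak{E}$ (via $\Gamma\!_{-1]}=\{\emptyset\}$) explicitly, which the paper dispatches with ``similarly.''
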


\begin{proof}
Let $k\geq 0$. Then,
for all $\Phi\in \mathcal{S}^*(Z)$ and $\sigma \in \Gamma$, by Theorems~\ref{thr-3-3} and ~\ref{thr-3-9}, we get
\begin{equation*}
 \widehat{\mathfrak{E}_k\mathfrak{a}_k^{\dag}\Phi}(\sigma)
= \mathbf{1}_{\Gamma\!_{k]}}(\sigma)\mathbf{1}_{\sigma}(k)\widehat{\Phi}(\sigma\setminus k)
= \mathbf{1}_{\Gamma\!_{k]}}(\sigma\setminus k)\mathbf{1}_{\sigma}(k)\widehat{\Phi}(\sigma\setminus k)
= \widehat{\mathfrak{a}_k^{\dag}\mathfrak{E}_k\Phi}(\sigma),
\end{equation*}
where equality $\mathbf{1}_{\Gamma\!_{k]}}(\sigma)\mathbf{1}_{\sigma}(k)=\mathbf{1}_{\Gamma\!_{k]}}(\sigma\setminus k)\mathbf{1}_{\sigma}(k)$ is used.
Thus $\mathfrak{E}_k\mathfrak{a}_k^{\dag} = \mathfrak{a}_k^{\dag}\mathfrak{E}_k$ holds.
Similarly, we can verify $\mathfrak{E}_k\mathfrak{a}_k = \mathfrak{E}_{k-1}\mathfrak{a}_k$.
\end{proof}

Combining Theorem~\ref{thr-3-12} with Proposition~\ref{prop-3-13}, we come to the next interesting result, which we call
the Clark-Ocone formula for generalized functionals of $Z$.

\begin{theorem}\label{thr-3-14}
For all generalized functional $\Phi\in \mathcal{S}^*(Z)$, it holds that
 \begin{equation}\label{eq-3-21}
   \Phi = \mathfrak{E}\Phi + \sum_{k=0}^{\infty}\mathfrak{a}_k^{\dag}\mathfrak{E}_{k-1}\mathfrak{a}_k\Phi,
 \end{equation}
where $\mathfrak{E}_{-1}=\mathfrak{E}$ and the series on the righthand side converges strongly in $\mathcal{S}^*(Z)$.
\end{theorem}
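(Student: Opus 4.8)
The plan is to derive the formula by a term-by-term rewriting of the series already obtained in Theorem~\ref{thr-3-12}, using the two intertwining identities of Proposition~\ref{prop-3-13}. Since the statement is essentially a reshuffling of operators inside each summand, I expect that no fresh convergence analysis will be needed: the partial sums of the two series should coincide exactly, so that the strong convergence established in Theorem~\ref{thr-3-12} transfers to the new series without extra work.

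First I would fix $\Phi \in \mathcal{S}^*(Z)$ and an arbitrary index $k \geq 0$, and examine the single composition $\mathfrak{E}_k\mathfrak{a}_k^{\dag}\mathfrak{a}_k$ appearing in the $k$-th summand of the right-hand side of (\ref{eq-3-18}). Applying the first identity of Proposition~\ref{prop-3-13}, namely $\mathfrak{E}_k\mathfrak{a}_k^{\dag} = \mathfrak{a}_k^{\dag}\mathfrak{E}_k$, I would move the conditional expectation operator past the creation operator to obtain $\mathfrak{E}_k\mathfrak{a}_k^{\dag}\mathfrak{a}_k = \mathfrak{a}_k^{\dag}\mathfrak{E}_k\mathfrak{a}_k$. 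Then, invoking the second identity $\mathfrak{E}_k\mathfrak{a}_k = \mathfrak{E}_{k-1}\mathfrak{a}_k$ (with the convention $\mathfrak{E}_{-1} = \mathfrak{E}$), I would rewrite the inner composition to reach $\mathfrak{a}_k^{\dag}\mathfrak{E}_k\mathfrak{a}_k = \mathfrak{a}_k^{\dag}\mathfrak{E}_{k-1}\mathfrak{a}_k$. Chaining these two equalities yields the operator identity $\mathfrak{E}_k\mathfrak{a}_k^{\dag}\mathfrak{a}_k = \mathfrak{a}_k^{\dag}\mathfrak{E}_{k-1}\mathfrak{a}_k$ valid for every $k \geq 0$.

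With this identity in hand, I would observe that, for each fixed $k$, the $k$-th term of the series in (\ref{eq-3-18}) and the $k$-th term of the series in (\ref{eq-3-21}) are literally equal when applied to $\Phi$; hence the partial sums $\sum_{k=0}^{n}\mathfrak{E}_k\mathfrak{a}_k^{\dag}\mathfrak{a}_k\Phi$ and $\sum_{k=0}^{n}\mathfrak{a}_k^{\dag}\mathfrak{E}_{k-1}\mathfrak{a}_k\Phi$ coincide for every $n \geq 0$. Since Theorem~\ref{thr-3-12} guarantees that the former sequence of partial sums converges strongly to $\Phi - \mathfrak{E}\Phi$ in $\mathcal{S}^*(Z)$, the latter sequence does as well, and transposing $\mathfrak{E}\Phi$ to the other side gives precisely (\ref{eq-3-21}).

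As for the main obstacle, there is essentially none at the level of the present theorem: all the analytic content — continuity of the operators, the Fock-transform computation underlying (\ref{eq-3-19}), and the strong-convergence criterion of Theorem~\ref{thr-2-9} — has already been discharged in the proof of Theorem~\ref{thr-3-12}. The only point demanding a little care is the bookkeeping of the convention $\mathfrak{E}_{-1} = \mathfrak{E}$ in the case $k = 0$, where one must confirm that the summand $\mathfrak{a}_0^{\dag}\mathfrak{E}_{-1}\mathfrak{a}_0\Phi = \mathfrak{a}_0^{\dag}\mathfrak{E}\mathfrak{a}_0\Phi$ agrees with $\mathfrak{E}_0\mathfrak{a}_0^{\dag}\mathfrak{a}_0\Phi$; this is exactly the $k=0$ instance of the operator identity established above, so it poses no genuine difficulty.
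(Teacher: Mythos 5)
Your proposal is correct and is precisely the paper's own argument: the paper derives Theorem~\ref{thr-3-14} by combining Theorem~\ref{thr-3-12} with Proposition~\ref{prop-3-13}, which is exactly your term-by-term rewriting $\mathfrak{E}_k\mathfrak{a}_k^{\dag}\mathfrak{a}_k = \mathfrak{a}_k^{\dag}\mathfrak{E}_k\mathfrak{a}_k = \mathfrak{a}_k^{\dag}\mathfrak{E}_{k-1}\mathfrak{a}_k$ applied to each summand, with the identical partial sums carrying over the strong convergence. Your observation that no new convergence analysis is needed is also the paper's (implicit) stance.
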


\begin{remark}
As mentioned above, $\partial_k$ and $\partial_k^*$ are the annihilation and creation operators on $\mathcal{L}^2(Z)$, respectively,
and $P_k = \mathbb{E}[\cdot\,|\, \mathcal{F}_k]$ is the conditional expectation operator on $\mathcal{L}^2(Z)$.
It can be verified that
\begin{equation}\label{eq-3-22}
\partial_k^* P_{k-1}\eta = Z_k P_{k-1}\eta,\quad \forall\, k\geq 0,\, \forall\,\eta\in \mathcal{L}^2(Z),
\end{equation}
where $P_{-1}=\mathbb{E}$ and $Z_k$ the $k$-component of the discrete-time normal noise $Z$.
Thus the Clark-Ocone formula (\ref{eq-1-1}) can be rewritten as the following form
\begin{equation}\label{eq-3-23}
  \xi = \mathbb{E}\xi + \sum_{k=0}^{\infty} \partial_k^* P_{k-1}\partial_k \xi,\quad \xi \in \mathcal{L}^2(Z),
\end{equation}
where the series on the righthand side converges in the norm of $\mathcal{L}^2(Z)$.
This observation justifies calling formula (\ref{eq-3-21}) the Clark-Ocone formula for generalized functionals of $Z$.
\end{remark}

\section{Applications}\label{sec-4}

In the final section, we show some applications of our Clark-Ocone formula.

For $p\geq 0$ and $\Phi$, $\Psi\in \mathcal{S}^*(Z)$, we define $\langle \Phi,\Psi\rangle_{-p}$ as
\begin{equation}\label{eq-4-1}
  \langle \Phi,\Psi\rangle_{-p}= \sum_{\sigma\in \Gamma} \lambda_{\sigma}^{-2p} \widehat{\Phi}(\sigma)\overline{\widehat{\Psi}(\sigma)}
\end{equation}
provided the series on the righthand side absolutely converges.
Note that, if $\Phi$, $\Psi\in \mathcal{S}_p^*(Z)$, then by Theorem~\ref{thr-2-8} the series in (\ref{eq-4-1}) absolutely converges,
hence $\langle \Phi,\Psi\rangle_{-p}$ makes sense, and in particular
\begin{equation}\label{eq-4-2}
  \langle \Phi,\Phi\rangle_{-p} = \|\Phi\|_{-p}^2.
\end{equation}

\begin{definition}\label{def-4-1}
For generalized functionals $\Phi$, $\Psi\in \mathcal{S}^*(Z)$, their $p$-covariant $\mathrm{Cov}_p(\Phi,\Psi)$, $p\geq0$, is defined as
\begin{equation}\label{eq-4-3}
  \mathrm{Cov}_p(\Phi,\Psi)= \langle \Phi-\mathfrak{E}\Phi,\Psi-\mathfrak{E}\Psi\rangle_{-p}
\end{equation}
provided the righthand side makes sense.
\end{definition}

By convention, $\mathrm{Var}_p(\Phi)\equiv\mathrm{Cov}_p(\Phi,\Phi)$ is called the $p$-variant of generalized functional $\Phi$.
Clearly, $\mathrm{Var}_p(\Phi)=\|\Phi-\mathfrak{E}\Phi\|_{-p}^2$ if $\Phi\in \mathcal{S}_p^*(Z)$.

\begin{theorem}\label{thr-4-1}
Let $\Phi$, $\Psi\in \mathcal{S}_p^*(Z)$ for some $p\geq 0$. Then
their $p$-covariant $\mathrm{Cov}_p(\Phi,\Psi)$ makes sense, and moreover
\begin{equation}\label{eq-4-4}
  \mathrm{Cov}_p(\Phi,\Psi)
=\sum_{k=0}^{\infty}\big\langle \mathfrak{E}_k\mathfrak{a}_k^{\dag}\mathfrak{a}_k\Phi, \mathfrak{E}_k\mathfrak{a}_k^{\dag}\mathfrak{a}_k\Psi\big\rangle_{-p}.
\end{equation}
\end{theorem}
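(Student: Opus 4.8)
The plan is to read the right-hand side of \eqref{eq-4-4} as an orthogonal expansion of the Clark--Ocone decomposition, so that the covariant reduces to a diagonal sum. First, to see that $\mathrm{Cov}_p(\Phi,\Psi)$ makes sense, I would invoke the fact, established inside the proof of Theorem~\ref{thr-3-7}, that $\mathfrak{E}$ leaves $\mathcal{S}_p^*(Z)$ invariant with $\|\mathfrak{E}\Phi\|_{-p}\le\|\Phi\|_{-p}$. Consequently both $\Phi-\mathfrak{E}\Phi$ and $\Psi-\mathfrak{E}\Psi$ lie in $\mathcal{S}_p^*(Z)$, and the remark following \eqref{eq-4-2} then guarantees that the series defining $\langle\Phi-\mathfrak{E}\Phi,\Psi-\mathfrak{E}\Psi\rangle_{-p}$ converges absolutely.

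The heart of the matter is the Fock transform of the generic summand $\Phi_k:=\mathfrak{E}_k\mathfrak{a}_k^{\dag}\mathfrak{a}_k\Phi$. As already computed in \eqref{eq-3-19}, composing the three Fock-level rules of Theorems~\ref{thr-3-1}, \ref{thr-3-3} and \ref{thr-3-9} gives
\[
\widehat{\Phi_k}(\sigma)=\mathbf{1}_{\Gamma\!_{k]}}(\sigma)\,\mathbf{1}_{\sigma}(k)\,\widehat{\Phi}(\sigma),\quad\sigma\in\Gamma.
\]
The two indicators jointly force $k\in\sigma$ and $\max\sigma\le k$, that is $k=\max\sigma$; hence $\widehat{\Phi_k}$ is supported on $\{\sigma\in\Gamma : \max\sigma=k\}$, and writing $\Psi_k:=\mathfrak{E}_k\mathfrak{a}_k^{\dag}\mathfrak{a}_k\Psi$ the same holds for $\widehat{\Psi_k}$. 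Since these supports are pairwise disjoint as $k$ ranges over $\mathbb{N}$, the defining expression \eqref{eq-4-1} immediately yields the orthogonality relation $\langle\Phi_k,\Psi_j\rangle_{-p}=0$ whenever $k\ne j$.

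Next I would upgrade the mode of convergence of the Clark--Ocone series. Because $\Phi\in\mathcal{S}_p^*(Z)$, Theorem~\ref{thr-2-8} gives $\sum_{\sigma}\lambda_\sigma^{-2p}|\widehat{\Phi}(\sigma)|^2<\infty$, and the disjoint-support structure shows
\[
\sum_{k=0}^{\infty}\|\Phi_k\|_{-p}^2=\sum_{\sigma\ne\emptyset}\lambda_\sigma^{-2p}\big|\widehat{\Phi}(\sigma)\big|^2\le\|\Phi\|_{-p}^2<\infty,
\]
each nonempty $\sigma$ being counted exactly once via its maximum. By the Pythagorean theorem in the Hilbert space $\mathcal{S}_p^*(Z)$ the partial sums of $\sum_k\Phi_k$ are therefore Cauchy in the $\|\cdot\|_{-p}$-norm; their limit coincides with $\Phi-\mathfrak{E}\Phi$, which is already the strong $\mathcal{S}^*(Z)$-limit provided by Theorem~\ref{thr-3-12}. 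With $\Phi-\mathfrak{E}\Phi=\sum_k\Phi_k$ and $\Psi-\mathfrak{E}\Psi=\sum_k\Psi_k$ both convergent in $\mathcal{S}_p^*(Z)$, the bilinearity and continuity of the inner product $\langle\cdot,\cdot\rangle_{-p}$ let me expand $\mathrm{Cov}_p(\Phi,\Psi)$ as a double series, whereupon the orthogonality relation collapses it to the diagonal, giving exactly \eqref{eq-4-4}.

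I expect the main obstacle to be precisely this convergence upgrade: Theorem~\ref{thr-3-12} only furnishes strong convergence in the inductive-limit space $\mathcal{S}^*(Z)$, whereas \eqref{eq-4-4} is an identity inside the fixed Hilbert space $\mathcal{S}_p^*(Z)$, so one must independently verify $\|\cdot\|_{-p}$-convergence through the Bessel-type bound above and then match the two limits. Once the series is known to converge in $\mathcal{S}_p^*(Z)$, the concluding orthogonal expansion is entirely routine.
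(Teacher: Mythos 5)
Your proof is correct, and it rests on the same key observation as the paper's: the Fock transform of $\mathfrak{E}_k\mathfrak{a}_k^{\dag}\mathfrak{a}_k\Phi$ equals $\mathbf{1}_{\Gamma\!_{k]}}(\sigma)\mathbf{1}_{\sigma}(k)\widehat{\Phi}(\sigma)$, which is supported on $\{\sigma : \max\sigma = k\}$, and these supports are pairwise disjoint. Where you diverge is in how this disjointness is exploited. The paper never leaves the level of scalar coefficient series: it writes $\widehat{\Phi-\mathfrak{E}\Phi}(\sigma)$ and $\widehat{\Psi-\mathfrak{E}\Psi}(\sigma)$ each as a sum over $k$ with at most one nonzero term, multiplies them, kills the off-diagonal products by the identity $\mathbf{1}_{\Gamma\!_{j]}}(\sigma)\mathbf{1}_{\sigma}(j)\mathbf{1}_{\Gamma\!_{k]}}(\sigma)\mathbf{1}_{\sigma}(k)=0$ for $j\neq k$, and then interchanges the sums over $\sigma$ and $k$, the interchange being licensed by the absolute convergence guaranteed by Theorem~\ref{thr-2-8}. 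You instead package the disjointness as orthogonality in the Hilbert space $(\mathcal{S}_p^*(Z),\langle\cdot,\cdot\rangle_{-p})$, prove via the Pythagorean/Bessel bound that the Clark--Ocone series actually converges in the $\|\cdot\|_{-p}$-norm (not merely strongly in $\mathcal{S}^*(Z)$ as Theorem~\ref{thr-3-12} asserts), identify the two limits through the continuous embedding $\mathcal{S}_p^*(Z)\hookrightarrow\mathcal{S}^*(Z)$, and then expand the inner product by continuity. Your route is slightly longer but buys a genuinely stronger intermediate statement --- norm convergence of $\sum_k\mathfrak{E}_k\mathfrak{a}_k^{\dag}\mathfrak{a}_k\Phi$ in $\mathcal{S}_p^*(Z)$ whenever $\Phi\in\mathcal{S}_p^*(Z)$ --- which the paper's coefficient-level Fubini argument deliberately avoids needing; your opening step (that $\mathfrak{E}$ preserves $\mathcal{S}_p^*(Z)$ with $\|\mathfrak{E}\Phi\|_{-p}\le\|\Phi\|_{-p}$, so that $\mathrm{Cov}_p(\Phi,\Psi)$ is well defined) is also a point the paper glosses over. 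All steps check out.
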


\begin{proof}
By Theorem~\ref{thr-2-8}, the series on the righthand side of (\ref{eq-4-1}) converges absolutely.
On the other hand, by Theorem~\ref{thr-3-12}, we have
\begin{equation*}
\begin{split}
  \mathrm{Cov}_p(\Phi,\Psi)
  &= \sum_{\sigma\in \Gamma} \lambda_{\sigma}^{-2p} \widehat{\Phi-\mathfrak{E}\Phi}(\sigma)\overline{\widehat{\Psi-\mathfrak{E}\Psi}(\sigma)}\\
  &= \sum_{\sigma\in \Gamma} \lambda_{\sigma}^{-2p}
      \Big[\sum_{k=0}^{\infty}\mathbf{1}_{\Gamma\!_{k]}}(\sigma)\mathbf{1}_{\sigma}(k)\widehat{\Phi}(\sigma)\Big]
      \Big[\sum_{k=0}^{\infty}\mathbf{1}_{\Gamma\!_{k]}}(\sigma)\mathbf{1}_{\sigma}(k)\overline{\widehat{\Psi}(\sigma)}\,\Big],
\end{split}
\end{equation*}
which together with the fact
\begin{equation*}
  \mathbf{1}_{\Gamma\!_{j]}}(\sigma)\mathbf{1}_{\sigma}(j)\mathbf{1}_{\Gamma\!_{k]}}(\sigma)\mathbf{1}_{\sigma}(k)=0,\quad
j\neq k,\, j,\, k\geq 0,\, \sigma\in \Gamma
\end{equation*}
gives
\begin{equation*}
\begin{split}
  \mathrm{Cov}_p(\Phi,\Psi)
    &= \sum_{\sigma\in \Gamma} \lambda_{\sigma}^{-2p}
      \sum_{k=0}^{\infty}\mathbf{1}_{\Gamma\!_{k]}}(\sigma)\mathbf{1}_{\sigma}(k)\widehat{\Phi}(\sigma)
      \overline{\widehat{\Psi}(\sigma)}\\
    &= \sum_{k=0}^{\infty}\sum_{\sigma\in \Gamma} \lambda_{\sigma}^{-2p} \big[\mathbf{1}_{\Gamma\!_{k]}}(\sigma)\mathbf{1}_{\sigma}(k)\widehat{\Phi}(\sigma)\big]
        \big[\mathbf{1}_{\Gamma\!_{k]}}(\sigma)\mathbf{1}_{\sigma}(k)\overline{\widehat{\Psi}(\sigma)}\big]\\
   &= \sum_{k=0}^{\infty}\sum_{\sigma\in \Gamma} \lambda_{\sigma}^{-2p}\widehat{\mathfrak{E}_k\mathfrak{a}_k^{\dag}\mathfrak{a}_k\Phi}(\sigma)
       \overline{\widehat{\mathfrak{E}_k\mathfrak{a}_k^{\dag}\mathfrak{a}_k\Psi}(\sigma)}\\
  &=  \sum_{k=0}^{\infty}\big\langle \mathfrak{E}_k\mathfrak{a}_k^{\dag}\mathfrak{a}_k\Phi, \mathfrak{E}_k\mathfrak{a}_k^{\dag}\mathfrak{a}_k\Psi\big\rangle_{-p}.
\end{split}
\end{equation*}
This completes the proof.
\end{proof}

Theorem~\ref{thr-4-1} sets up covariant identities for generalized functionals of $Z$. The next theorem then gives meaningful
upper bounds to variants of generalized functionals of $Z$.

\begin{theorem}\label{thr-4-2}
Let $\Phi\in \mathcal{S}_p^*(Z)$ for some $p\geq 0$. Then its
$p$-variant $\mathrm{Var}_p(\Phi)$ makes sense, and moreover
\begin{equation}\label{}
  \mathrm{Var}_p(\Phi)
\leq \sum_{k=0}^{\infty}\big\|\mathfrak{a}_k^{\dag}\mathfrak{a}_k\Phi\|_{-p}^2.
\end{equation}
\end{theorem}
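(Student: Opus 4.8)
The plan is to deduce this directly from the covariant identity of Theorem~\ref{thr-4-1} combined with the contraction property of the conditional expectation operators established in Theorem~\ref{thr-3-10}. First I would note that $\mathrm{Var}_p(\Phi)$ is by definition $\mathrm{Cov}_p(\Phi,\Phi)$, so I would invoke Theorem~\ref{thr-4-1} with $\Psi=\Phi$. Before doing so, I must check that the $p$-variant makes sense, i.e. that all the generalized functionals appearing are in $\mathcal{S}_p^*(Z)$. This is where the invariance results do the work: since $\Phi\in\mathcal{S}_p^*(Z)$, Theorem~\ref{thr-3-2} gives $\mathfrak{a}_k\Phi\in\mathcal{S}_p^*(Z)$, then Theorem~\ref{thr-3-4} gives $\mathfrak{a}_k^{\dag}\mathfrak{a}_k\Phi\in\mathcal{S}_p^*(Z)$, and finally Theorem~\ref{thr-3-10} gives $\mathfrak{E}_k\mathfrak{a}_k^{\dag}\mathfrak{a}_k\Phi\in\mathcal{S}_p^*(Z)$. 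In particular $\Phi-\mathfrak{E}\Phi\in\mathcal{S}_p^*(Z)$, so by (\ref{eq-4-2}) the quantity $\mathrm{Var}_p(\Phi)=\|\Phi-\mathfrak{E}\Phi\|_{-p}^2$ is well defined.

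Next I would apply Theorem~\ref{thr-4-1} with $\Psi=\Phi$, which yields
\begin{equation*}
  \mathrm{Var}_p(\Phi)
  = \sum_{k=0}^{\infty}\big\langle \mathfrak{E}_k\mathfrak{a}_k^{\dag}\mathfrak{a}_k\Phi,\, \mathfrak{E}_k\mathfrak{a}_k^{\dag}\mathfrak{a}_k\Phi\big\rangle_{-p}
  = \sum_{k=0}^{\infty}\big\|\mathfrak{E}_k\mathfrak{a}_k^{\dag}\mathfrak{a}_k\Phi\big\|_{-p}^2,
\end{equation*}
where the second equality uses (\ref{eq-4-2}) again, now applied to the element $\mathfrak{E}_k\mathfrak{a}_k^{\dag}\mathfrak{a}_k\Phi\in\mathcal{S}_p^*(Z)$ for each $k$.

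Finally, I would apply the contraction inequality of Theorem~\ref{thr-3-10} termwise. Setting $\Psi_k=\mathfrak{a}_k^{\dag}\mathfrak{a}_k\Phi\in\mathcal{S}_p^*(Z)$, that theorem gives $\|\mathfrak{E}_k\Psi_k\|_{-p}\leq\|\Psi_k\|_{-p}$, hence
\begin{equation*}
  \big\|\mathfrak{E}_k\mathfrak{a}_k^{\dag}\mathfrak{a}_k\Phi\big\|_{-p}^2
  \leq \big\|\mathfrak{a}_k^{\dag}\mathfrak{a}_k\Phi\big\|_{-p}^2,\quad k\geq 0.
\end{equation*}
Summing over $k$ and combining with the displayed identity for $\mathrm{Var}_p(\Phi)$ yields the claimed bound. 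Since each step is an application of an already-established result and the only inequality is the termwise contraction, I do not anticipate any genuine obstacle; the one point requiring care is simply the bookkeeping that every intermediate functional lies in $\mathcal{S}_p^*(Z)$, so that the $-p$ norms and the series are meaningful, which the invariance theorems guarantee.
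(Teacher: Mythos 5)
Your proposal is correct and follows essentially the same route as the paper: both deduce the bound by combining the covariance identity of Theorem~\ref{thr-4-1} (with $\Psi=\Phi$) with the invariance and contraction results of Theorems~\ref{thr-3-2}, \ref{thr-3-4} and \ref{thr-3-10}, applied termwise. Your version merely spells out the membership bookkeeping in more detail than the paper does.
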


\begin{proof}
By Theorems~\ref{thr-3-2}, \ref{thr-3-4} and \ref{thr-3-10}, we know that $\mathfrak{E}_k\mathfrak{a}_k^{\dag}\mathfrak{a}_k\Phi$ belongs to $\mathcal{S}_p^*(Z)$
and
\begin{equation*}
\|\mathfrak{E}_k\mathfrak{a}_k^{\dag}\mathfrak{a}_k\Phi\|_{-p}\leq \|\mathfrak{a}_k^{\dag}\mathfrak{a}_k\Phi\|_{-p},\quad k\geq 0.
\end{equation*}
This together with (\ref{eq-4-2}) and (\ref{eq-4-4}) yields
\begin{equation*}
  \mathrm{Var}_p(\Phi)
=\sum_{k=0}^{\infty} \|\mathfrak{E}_k\mathfrak{a}_k^{\dag}\mathfrak{a}_k\Phi\|_{-p}^2
\leq \sum_{k=0}^{\infty}\big\|\mathfrak{a}_k^{\dag}\mathfrak{a}_k\Phi\|_{-p}^2.
\end{equation*}
This completes the proof.
\end{proof}

A sequence $u=(u_k)$ of generalized functionals in $\mathcal{S}^*(Z)$ is said to be $(\mathfrak{E}_k)$-predictable if
\begin{equation}\label{}
  u_k= \mathfrak{E}_{k-1}u_k,\quad k\geq 0.
\end{equation}
It is said to be $(\mathfrak{a}_k^{\dag})$-integrable if the series $\sum_{k=0}^{\infty}\mathfrak{a}_k^{\dag}u_k$ converges
strongly in $\mathcal{S}^*(Z)$. In that case, we call $\sum_{k=0}^{\infty}\mathfrak{a}_k^{\dag}u_k$ the generalized stochastic integral
of $u$ with respect to $(\mathfrak{a}_k^{\dag})$ and write
\begin{equation}\label{}
  \mathfrak{I}(u)= \sum_{k=0}^{\infty}\mathfrak{a}_k^{\dag}u_k.
\end{equation}

\begin{theorem}\label{thr-4-3}
Let $\Phi\in \mathcal{S}^*(Z)$. Then the sequence $u=(\mathfrak{E}_{k-1}\mathfrak{a}_k \Phi)_{k\geq 0}$ of
generalized functionals in $\mathcal{S}^*(Z)$ is $(\mathfrak{E}_k)$-predictable and
$(\mathfrak{a}_k^{\dag})$-integrable, and moreover
\begin{equation}\label{eq-4-8}
  \Phi = \mathfrak{E}\Phi+ \mathfrak{I}(u).
\end{equation}
\end{theorem}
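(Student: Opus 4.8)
The plan is to recognize that this statement is essentially a repackaging of the Clark-Ocone formula already established in Theorem~\ref{thr-3-14}, together with one elementary property of the conditional expectation operators, namely their idempotency. I would organize the argument around the three assertions in turn: predictability of $u$, its $(\mathfrak{a}_k^{\dag})$-integrability, and the integral representation~(\ref{eq-4-8}).

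First I would verify that $u=(\mathfrak{E}_{k-1}\mathfrak{a}_k\Phi)_{k\geq 0}$ is $(\mathfrak{E}_k)$-predictable. This reduces to showing that each $\mathfrak{E}_{k-1}$ is idempotent, since then $\mathfrak{E}_{k-1}u_k=\mathfrak{E}_{k-1}^2\mathfrak{a}_k\Phi=\mathfrak{E}_{k-1}\mathfrak{a}_k\Phi=u_k$. Idempotency is immediate at the level of Fock transforms: for $k\geq 0$ one has $\widehat{\mathfrak{E}_k\Phi}(\sigma)=\mathbf{1}_{\Gamma\!_{k]}}(\sigma)\widehat{\Phi}(\sigma)$ by (\ref{eq-conditional}), so applying $\mathfrak{E}_k$ twice reproduces the factor $\mathbf{1}_{\Gamma\!_{k]}}(\sigma)^2=\mathbf{1}_{\Gamma\!_{k]}}(\sigma)$ and yields $\mathfrak{E}_k^2=\mathfrak{E}_k$. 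The boundary index $k=0$ uses the convention $\mathfrak{E}_{-1}=\mathfrak{E}$, and here idempotency follows the same way from (\ref{eq-3-15}) together with the orthonormality of $\{Z_{\sigma}\}$, which gives $\langle 1,Z_{\sigma}\rangle=\langle Z_{\emptyset},Z_{\sigma}\rangle=0$ for $\sigma\neq\emptyset$; hence $\widehat{\mathfrak{E}\Phi}(\emptyset)=\widehat{\Phi}(\emptyset)$, which forces $\widehat{\mathfrak{E}^2\Phi}=\widehat{\mathfrak{E}\Phi}$ and thus $\mathfrak{E}^2=\mathfrak{E}$.

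Next I would invoke Theorem~\ref{thr-3-14} directly. Writing $u_k=\mathfrak{E}_{k-1}\mathfrak{a}_k\Phi$, the Clark-Ocone formula (\ref{eq-3-21}) reads $\Phi=\mathfrak{E}\Phi+\sum_{k=0}^{\infty}\mathfrak{a}_k^{\dag}u_k$ with the series converging strongly in $\mathcal{S}^*(Z)$. The strong convergence of $\sum_{k=0}^{\infty}\mathfrak{a}_k^{\dag}u_k$ is precisely the assertion that $u$ is $(\mathfrak{a}_k^{\dag})$-integrable, so this property is handed to us by Theorem~\ref{thr-3-14}. Consequently $\mathfrak{I}(u)=\sum_{k=0}^{\infty}\mathfrak{a}_k^{\dag}u_k=\Phi-\mathfrak{E}\Phi$, which rearranges to the desired identity~(\ref{eq-4-8}).

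Since every ingredient is already in place, I do not expect any genuine obstacle here; the content of the result is organizational, casting the Clark-Ocone decomposition into the language of predictable sequences and their generalized stochastic integrals. The only point requiring a moment of care is the boundary case $k=0$, where $\mathfrak{E}_{-1}$ must be read as $\mathfrak{E}$ and its idempotency checked from (\ref{eq-3-15}) rather than (\ref{eq-conditional}); beyond that, the proof is a short deduction from Theorem~\ref{thr-3-14}.
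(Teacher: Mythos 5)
Your proposal is correct and follows essentially the same route as the paper, which simply declares the theorem an immediate consequence of Theorem~\ref{thr-3-14}. The only addition is your explicit verification of the idempotency $\mathfrak{E}_{k-1}^2=\mathfrak{E}_{k-1}$ (via the Fock transform, including the boundary case $\mathfrak{E}_{-1}=\mathfrak{E}$) to justify predictability, a detail the paper leaves implicit; this check is accurate and worth recording.
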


\begin{proof}
This is an immediate consequence of Theorem~\ref{thr-3-14}.
\end{proof}

\begin{remark}\label{rem-4-1}
A generalized functional of $Z$, or in other words, a generalized functional in $\mathcal{S}^*(Z)$,
can be interpreted as a generalized random variable on the probability space $(\Omega, \mathcal{F}, P)$.
Accordingly, a sequence of generalized functionals of $Z$ can be viewed as a generalized stochastic process.
Theorem~\ref{thr-4-3} then shows that each generalized random variable on $(\Omega, \mathcal{F}, P)$
can be represented as the generalized stochastic integral of an $(\mathfrak{E}_k)$-predictable generalized stochastic process
with respect to $(\mathfrak{a}_k^{\dag})$.
\end{remark}

\section*{Acknowledgement}

This work is supported by National Natural Science Foundation of China (Grant No. 11461061).

\end{document}